\documentclass{amsart}
\usepackage[T1]{fontenc}
\newtheorem{theorem}{Theorem}
\usepackage{amssymb}
\usepackage{amsmath}
\usepackage{styles_1}
\usepackage{dblfloatfix}
\usepackage{hyperref}

\usepackage[numbers,sort&compress]{natbib}
%\addbibresource{bib.bib}
\setlength{\textwidth}{\paperwidth}
\addtolength{\textwidth}{-2in}
\calclayout
\setcounter{theorem}{0}
\usepackage{lipsum}
\newcommand{\notimplies}{%
\mathrel{{\ooalign{\hidewidth$\not\phantom{=}$\hidewidth\cr$\implies$}}}}

\title{Symmetry of concentration and scaling for self-bounding functions}
\author{George Crowley and I\~naki Esnaola
}
\date{\today}

\begin{document}

\begin{abstract}
We prove generalised concentration inequalities for a class of scaled self-bounding functions of independent random variables, referred to as ${(M,a,b)}$ self-bounding. The scaling refers to the fact that the component-wise difference is upper bounded by an arbitrary positive real number $M$ instead of the case $M=1$ previously considered in the literature. Using the entropy method, we derive symmetric bounds for both the upper and lower tails, and study the tightness of the proposed bounds. Our results improve existing bounds for functions that satisfy the ($a,b$) self-bounding property.
\end{abstract} 

\maketitle
\section{Keywords}
concentration inequalities; $(a,b)$ self-bounding; ${(M,a,b)}$ self-bounding; independent random variables

\section{Introduction} \vspace{2mm}

The concentration of measure phenomenon for functions of independent random variables has been studied extensively \cite{talagrandConcentrationMeasureIsoperimetric1995,talagrandNewConcentrationInequalities1996}. Classical results such as McDiarmid's bounded differences inequality \cite{mcdiarmidMethodBoundedDifferences1989} and extensions \cite{kontorovichConcentrationUnboundedMetric2013} have proved useful in many settings for bounding the difference between a function and its expected value. More recently, enabled by the application of the entropy method \cite{ledouxTalagrandsDeviationInequalities1997}, a popular class of concentration results has emerged based on functions which satisfy either the $(a,b)$ self-bounding property \cite{boucheronConcentrationInequalitiesNonasymptotic2013,boucheronConcentrationSelfboundingFunctions2009,maurerConcentrationInequalitiesFunctions2006,mcdiarmidConcentrationSelfboundingFunctions2006}, the weakly $(a,b)$ self-bounding property \cite{maurerConcentrationInequalitiesFunctions2006,maurerDominatedConcentration2010}, or their variations (e.g. self-bounding \cite{boucheronSharpConcentrationInequality2000}). A general summary of the most recent results is as follows: for $(a,b)$ self-bounding functions, the most recent bounds for the upper-tail, shown in Theorem \ref{theorem:old_results_upper_tail} are described in \cite{boucheronConcentrationSelfboundingFunctions2009}, whilst the upper bound for the lower tail, shown in Theorem \ref{theorem:old_results_lower_tail}, is studied in \cite{mcdiarmidConcentrationSelfboundingFunctions2006}. For a survey of other similar results, see \cite{boucheronConcentrationInequalitiesNonasymptotic2013}. 

As motivation for this work, it is well known that non-negative monotone submodular functions satisfy the self-bounding property ($a=1,b=0$ of Definition \ref{def:(a,b)self-bounding}) \cite{vondrak2010noteconcentrationsubmodularfunctions}, and satisfy Definition \ref{def:(a,b)self-bounding} further assuming $0 \leq f(\xv) - f_{i}(\xv^{(i)}) \leq 1$ for all $i$. However, there exist examples of these functions in which either $0 \leq f(\xv) - f_{i}(\xv^{(i)}) \ll 1$, or the bound does not hold. Of course, scaling the difference is a viable way of overcoming this issue, but that often comes at the expense of losing tightness due to its effect on the $b$ term. One such example comes from \cite{crowleySubmodularityMutualInformation,crowleyInformationtheoreticSensorPlacement2024} with application to the sensor placement problem, in which the bound explicitly relies on some function of the largest eigenvalue of a covariance matrix. Another example of self-bounding functions includes Rademacher averages \cite{boucheronSharpConcentrationInequality2000,pellegrina2021sharperconvergenceboundsmonte}, with many more examples given in \cite{boucheronConcentrationInequalitiesNonasymptotic2013}. As a result, having a generalised concentration inequality for self-bounding functions will lead to more versatile and accurate results with application in a multitude of science and engineering disciplines.

The contributions of this paper are as follows:
\begin{enumerate}
    \item We modify the definition of ($a,b$) self-bounding functions to ($M,a,b$) self-bounding functions and show that for these functions, there exist symmetric bounds for both the upper and lower tail. The new lower tail bound is a result of using Theorem \ref{theorem:Harris_inequality}.
    \item We show that for values of $M < 1$, Theorem \ref{theorem:M_symmetry} improves upon the existing bounds with $M=1$ in the ($a,b$) self-bounding definition. These results are further refined by Theorem \ref{theorem:M_improved}.
    \item Theorem \ref{theorem:M_improved} describes the conditions on $a$ and $M$ for which Theorem \ref{theorem:M_symmetry} can be strengthed using a similar analysis to that in \cite{boucheronConcentrationSelfboundingFunctions2009}. Moreover, we show in Section \ref{section:scaling} that the upper tail bound result provides improved, or equivalent, concentration in almost all cases when compared to rescaling a ($M,a,b$) function to an ($a,b$) self-bounding function from Theorem \ref{theorem:old_results_upper_tail}. Remark \ref{remark:improvement_upper_tail} discusses the potential for further extension of Theorem \ref{theorem:M_improved} through the application of numerical methods. The lower tail bound in Theorem \ref{theorem:M_improved} always provides stronger concentration than rescaling an ($M,a,b$) function to an ($a,b$) self-bounding function from Theorem \ref{theorem:old_results_lower_tail}.
\end{enumerate}

\subsection{Notation, definitions and previous results} 
% Notation section

Throughout, we assume the random variables $X_1,\dots,X_n$ are independent, with $X_i$ taking values in some space $\mathcal{X}_i$. Let $\Xv \coloneq (X_1,\dots,X_n)$ be the vector taking values in $\mathcal{X}^n \coloneq \prod_{i=1}^n \mathcal{X}_i$, and $\mathcal{F}$ be the associated $\sigma$-algebra on $\mathcal{X}^n$. Define $\mathcal{X}^n_{-i} \coloneq \mathcal{X}_1 \times \dots \times \mathcal{X}_{i-1} \times \mathcal{X}_{i+1} \times \dots \times \mathcal{X}_n$ and $\mathcal{F}_{-i}$ the associated $\sigma$-algebra. Let $\xv$ be a realisation of $\Xv$, and let similarly
    \begin{align}
        \xv^{(i)} \coloneq (x_1,x_2,\dots,x_{i-1},x_{i+1},\dots,x_n)
    \end{align}
be a realisation of $\xv$ with the $i$-th component removed. We assume throughout the rest of the paper that the function $f: \mathcal{X}^n \to \mathbb{R}$ is $\mathcal{F}$-measurable. Moreover, we define the function $f_i : \mathcal{X}^n_{-i} \to \mathbb{R}$ for each $i = 1,\dots,n$ as
        \begin{align} \label{eq:inf}
            f_{i}(\xv^{(i)}) \coloneq \inf_{x_i' \in \mathcal{X}_i} f(x_1,\dots,x_{i-1},x_i',x_{i+1},\dots,x_n).
        \end{align}
We also assume that $f_{i}(\xv^{(i)})$ is $\mathcal{F}_{-i}$ measurable similarly to \cite[Lemma 5]{mcdiarmidConcentrationSelfboundingFunctions2006}. Therein, it is shown the existence of a measurable function $\Tilde{f}_{i}(\xv)$, which does not depend on $x_i$, such that $f_i(\xv^{(i)}) \leq \Tilde{f}_{i}(\xv) \leq f(\xv)$ upto sets of measure zero. We proceed by introducing relevant definitions.      
                
\begin{definition}[$(a,b)$ self-bounding function] \label{def:(a,b)self-bounding}
Let $f: \mathcal{X}^n \to \mathbb{R}$ and  $a,b$ $\geq 0$. The function $f$ is $(a,b)$ self-bounding
if for $\xv \in \mathcal{X}^n$ and $i=1, \dots, n$, it holds that
        \begin{align} \nonumber
                0 \leq f(\xv) - f_{i}(\xv^{(i)}) \leq 1, \end{align} and
        \begin{align} \nonumber
                \sum_{i=1}^{n} (f(\xv) - f_{i}(\xv^{(i)})) \leq a f(\xv) + b.
        \end{align}            
\end{definition}

\begin{definition}[${(M,a,b)}$ self-bounding function] \label{def:(M,a,b)self-bounding}
Let $f: \mathcal{X}^n \to \mathbb{R}$ and  $a,b$ $\geq 0$. The function $f$ is $(M,a,b)$ self-bounding
if for $\xv \in \mathcal{X}^n$ and $i=1, \dots, n$, it holds that
            \begin{align} \nonumber
                0 \leq f(\xv) - f_{i}(\xv^{(i)}) \leq M, \end{align} and
            \begin{align} \nonumber
                \sum_{i=1}^{n} (f(\xv) - f_{i}(\xv^{(i)})) \leq a f(\xv) + b.
        \end{align}
\end{definition}

\begin{remark} 
The following conditions hold:
\begin{enumerate} 
    \item If $M \leq 1$, then $(a,b)$ self-bounding $\implies (M,a,b)$ self-bounding.
    \item If $M > 1,$ then $(a,b)$ self-bounding $\notimplies (M,a,b)$ self-bounding.
\end{enumerate}
\end{remark}

\begin{theorem} \label{theorem:old_results_upper_tail}
Let $\Xv = (X_1,\dots,X_n)$ be a vector of independent random variables, each taking values in the set $\mathcal{X}$ and let $f: \mathcal{X}^n \to \mathbb{R}$ be a non-negative, $(a,b)$ self-bounding, and measurable function such that $Z = f(\Xv)$ has finite mean.
    \begin{itemize}
        \item For $\lambda\geq0$, it holds that \cite{mcdiarmidConcentrationSelfboundingFunctions2006}: 
        \begin{align} \nonumber
            \log \E[e^{\lambda(Z-\E[Z])}] \leq \dfrac{a \E[Z] + b}{2(1 - a\lambda)} \lambda^2, \ \ \text{for} \ 0 \leq \lambda < \frac{1}{a},
        \end{align}
        and for $t > 0$,
        \begin{align} \nonumber
            \mathbb{P}(Z - \E[Z] \geq t) \leq \exp\left(- \frac{t^2}{2(a \E[Z] + b + at)}\right).
        \end{align}
        \item (A stronger condition): Define $c = (3a - 1)/6$, then for $\lambda\geq0$, it holds that \cite{boucheronConcentrationSelfboundingFunctions2009}
        \begin{align} \nonumber
            \log \E[e^{\lambda(Z-\E[Z])}] \leq \dfrac{a \E[Z] + b}{2(1 - c_{+}\lambda)} \lambda^2,
        \end{align}
         and for all $t > 0$,
          \begin{align} \nonumber
            \mathbb{P}(Z - \E[Z] \geq t) \leq \exp\left(- \frac{t^2}{2(a \E[Z] + b + c_{+} t)}\right), 
        \end{align}
        where $c_{+} = \max\{c,0\}$.
	\end{itemize}
\end{theorem}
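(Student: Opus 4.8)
The plan is to establish both tail bounds by the entropy method: first control the cumulant generating function of $Z - \E[Z]$ via a modified logarithmic Sobolev inequality, then apply the Cram\'er--Chernoff bound and optimise over $\lambda$. Write $Z = f(\Xv)$ and let $Z_i$ denote the measurable version of $f_i(\Xv^{(i)})$ discussed before Definition~\ref{def:(a,b)self-bounding}, so that $0 \le Z - Z_i \le 1$ and $\sum_{i=1}^n (Z - Z_i) \le aZ + b$ hold pointwise, with $Z_i$ measurable with respect to the coordinates other than $i$. Fix $\lambda \ge 0$ throughout; since $Z$ is only assumed to have finite mean, a routine truncation of $Z$ is used to guarantee $\E[e^{\lambda Z}] < \infty$ and to justify differentiating under the expectation, so I suppress this.

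The heart of the argument is a modified logarithmic Sobolev inequality. By the sub-additivity (tensorisation) of entropy, $\operatorname{Ent}(e^{\lambda Z}) \le \sum_{i=1}^n \E\bigl[\operatorname{Ent}_i(e^{\lambda Z})\bigr]$, where $\operatorname{Ent}_i$ is the entropy conditional on $(X_j)_{j\ne i}$. Into the variational identity $\operatorname{Ent}_i(Y) = \inf_{t>0}\E_i[Y\log Y - Y\log t - Y + t]$ I substitute $t = e^{\lambda Z_i}$, which is admissible precisely because $Z_i$ does not depend on $X_i$; with $Y = e^{\lambda Z}$ this simplifies to $\operatorname{Ent}_i(e^{\lambda Z}) \le \E_i\bigl[e^{\lambda Z}\,\psi(-\lambda(Z-Z_i))\bigr]$ for $\psi(x) = e^x - x - 1$. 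For $\lambda \ge 0$ the argument is non-positive, so the elementary inequality $\psi(-u) \le \tfrac12 u^2$ for $u \ge 0$, together with $0 \le Z - Z_i \le 1$, gives $\psi(-\lambda(Z-Z_i)) \le \tfrac12\lambda^2(Z-Z_i)^2 \le \tfrac12\lambda^2(Z-Z_i)$. Summing over $i$ and applying the self-bounding inequality yields
\begin{align} \nonumber
\operatorname{Ent}(e^{\lambda Z}) \le \frac{\lambda^2}{2}\,\E\!\left[(aZ+b)e^{\lambda Z}\right].
\end{align}

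Next I turn this into a differential inequality. With $F(\lambda) = \E[e^{\lambda Z}]$ one has $\operatorname{Ent}(e^{\lambda Z}) = \lambda F'(\lambda) - F(\lambda)\log F(\lambda)$ and the right-hand side equals $\tfrac12\lambda^2(aF'(\lambda)+bF(\lambda))$; dividing by $F(\lambda)$ and passing to $G(\lambda) = \log F(\lambda) - \lambda\E[Z] = \log\E[e^{\lambda(Z-\E[Z])}]$ gives $(\lambda - \tfrac12 a\lambda^2)G'(\lambda) - G(\lambda) \le \tfrac12 v\lambda^2$ with $v \coloneq a\E[Z] + b \ge 0$ (here non-negativity of $f$ is used). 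Substituting $G(\lambda) = \lambda\rho(\lambda)$, where $\rho(0^+) = G'(0) = 0$, linearises this to $\rho'(\lambda) \le (a\rho(\lambda)+v)/(2-a\lambda)$ on $0\le\lambda<2/a$; a Gr\"onwall-type integration in the variable $a\rho+v$ then gives $G(\lambda) \le v\lambda^2/(2-a\lambda) \le v\lambda^2/\bigl(2(1-a\lambda)\bigr)$ for $0\le\lambda<1/a$ (and $G(\lambda)\le \tfrac12 v\lambda^2$ when $a=0$). Finally, $\mathbb P(Z-\E[Z]\ge t) \le \exp\bigl(-\sup_{\lambda}(\lambda t - G(\lambda))\bigr)$, and the Legendre transform of $\lambda\mapsto v\lambda^2/(2(1-a\lambda))$ is bounded below by $t^2/\bigl(2(v+at)\bigr)$ (a standard sub-gamma computation), which produces the stated tail bound with $a\E[Z]+b$ in place of $v$.

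The second bullet follows the same template but with a sharper input: the crude bound $\psi(-u)\le\tfrac12 u^2$ on $[0,1]$ is replaced by a Bennett-type refinement that also uses the cubic term of $\psi$ and the constraint $Z-Z_i\le1$, which is the more delicate analysis of \cite{boucheronConcentrationSelfboundingFunctions2009}. Propagating the resulting entropy bound through the same ODE step replaces $a$ in the denominator by $c = (3a-1)/6$, truncated at $0$ because once $c\le0$ the refinement only sharpens the sub-Gaussian regime, giving the claimed bound with $c_+=\max\{c,0\}$. I expect this refinement to be the main obstacle: one must find a strengthening of the pointwise estimate on $\psi(-\lambda(Z-Z_i))$ that still survives the sum over $i$, the self-bounding substitution, and the subsequent integration; by comparison the first bullet is a fairly direct assembly of tensorisation, the variational formula, and one Gr\"onwall estimate. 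A secondary point common to both parts is making the truncation argument precise so that one may legitimately differentiate $\lambda\mapsto\E[e^{\lambda Z}]$.
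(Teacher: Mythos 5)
The paper does not prove this theorem at all: it is stated in the ``Notation, definitions and previous results'' subsection as a survey of results due to \cite{mcdiarmidConcentrationSelfboundingFunctions2006} (first bullet) and \cite{boucheronConcentrationSelfboundingFunctions2009} (second bullet), so there is no in-paper proof to compare against. That said, your outline for the first bullet is the standard and correct entropy-method argument: tensorisation of entropy, the variational substitution $t=e^{\lambda Z_i}$ to obtain the modified log-Sobolev inequality, the crude bound $\psi(-u)\le u^2/2$ combined with $(Z-Z_i)^2\le Z-Z_i$, the self-bounding sum, and the Gr\"onwall-type integration of the resulting Herbst differential inequality. In fact your integration produces the slightly sharper intermediate bound $G(\lambda)\le v\lambda^2/(2(1-a\lambda/2))$ on $0\le\lambda<2/a$, which is precisely what the paper's Theorem \ref{theorem:M_symmetry} specialises to at $M=1$; this then dominates the stated $v\lambda^2/(2(1-a\lambda))$, so your derivation is valid.

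For the second bullet you only gesture at the argument, and this is where the real content lies. The refinement is not simply a ``Bennett-type'' replacement inside the entropy bound: the key pointwise inequality is $\psi(-x)\le x^2/\bigl(2(1+x/3)\bigr)$ for $x\ge 0$, and propagating it through the differential inequality requires the comparison machinery of Lemmas 3--4 in \cite{boucheronConcentrationSelfboundingFunctions2009}, which this paper re-derives (in generalised $M$-form) as Lemma \ref{lemma:Boucheron_ODE} and Lemma \ref{lemma:boucheron_second_proof}. In particular, one must exhibit an explicit solution $G_{\gamma}$ of the associated ODE with $\gamma=c_+$, verify positivity of $1+aG'_{\gamma}$ on the relevant interval, and check that the optimal Chernoff $\lambda$ remains in that interval --- this is where the case split $a\le 1/3$ versus $a>1/3$ arises and where $c_+$ appears. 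So your proposal is correct and complete for the first bullet, but the second bullet remains a sketch whose execution is nontrivial; if you want to fill it in, the route to follow is exactly the $M=1$ specialisation of the paper's Lemmas \ref{lemma:Boucheron_ODE}--\ref{lemma:boucheron_second_proof} and the subsequent ``Condition 1 / Condition 2'' analysis.
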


\begin{theorem}
\label{theorem:old_results_lower_tail}
	Let $\Xv = (X_1,\dots,X_n)$ be a vector of independent random variables, each taking values in the set $\mathcal{X}$ and let $f: \mathcal{X}^n \to \mathbb{R}$ be a non-negative, $(a,b)$ self-bounding, measurable function such that $Z = f(\Xv)$ has finite mean. For $t > 0$ it holds that \cite{mcdiarmidConcentrationSelfboundingFunctions2006}: 
    \begin{align} \nonumber
        \mathbb{P}(Z - \E[Z] \leq - t) \leq \exp \left(- \dfrac{t^2}{2(a\E[Z] + b + t/3)}\right).
    \end{align}
\end{theorem}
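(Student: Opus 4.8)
The plan is to run the entropy method on $W = \E[Z] - Z$, that is, to control $\Psi(s) := \log \E[e^{-s(Z-\E[Z])}]$ for $s \ge 0$ and then to conclude through the Cram\'er--Chernoff bound $\mathbb{P}(Z-\E[Z]\le -t) \le \inf_{s\ge 0}\exp(-st + \Psi(s))$. Since $f$ is non-negative we have $0 \le e^{-sZ}\le 1$, so every moment generating function below is finite, and $Z=f(\Xv)$ is integrable by hypothesis. Write $G(\theta)=\E[e^{\theta Z}]$, set $Z_i = f_i(\Xv^{(i)})$ and $h_i = Z - Z_i$; the $(a,b)$ self-bounding property then reads $0 \le h_i \le 1$ and $\sum_{i=1}^n h_i \le aZ+b$. (Heuristically $a\E[Z]+b$ is the correct variance proxy, since by Efron--Stein $\mathrm{Var}(Z) \le \E[\sum_i h_i^2] \le \E[\sum_i h_i] \le a\E[Z]+b$.)

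First I would invoke the subadditivity of entropy: with $\mathrm{Ent}(Y) = \E[Y\log Y]-\E[Y]\log\E[Y]$ and $\mathrm{Ent}_i$ the same functional taken in the single variable $X_i$,
\[
\mathrm{Ent}\bigl(e^{-sZ}\bigr) \le \sum_{i=1}^n \E\!\left[\mathrm{Ent}_i\bigl(e^{-sZ}\bigr)\right].
\]
For each $i$ I would estimate the inner term by the variational inequality $\mathrm{Ent}(Y) \le \E[Y\log Y - Y\log u - Y + u]$, valid for every positive constant $u$, applied conditionally on the coordinates other than $X_i$ with the choice $u = e^{-sZ_i}$; this is legitimate because, as recalled in the excerpt, $f_i$ may be taken $\mathcal{F}_{-i}$-measurable, so $Z_i$ is constant under that conditional expectation. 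A short computation (using $e^{-sZ_i}=e^{-sZ}e^{sh_i}$) then gives
\[
\mathrm{Ent}_i\bigl(e^{-sZ}\bigr) \le \E_i\!\left[e^{-sZ}\,\phi(s h_i)\right], \qquad \phi(x):=e^{x}-x-1 .
\]

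Next I would use the structure of $\phi$ together with the self-bounding condition. As $\phi$ is convex with $\phi(0)=0$ and $0\le h_i\le 1$, we have $\phi(sh_i)\le h_i\phi(s)$; summing over $i$, using $\sum_i h_i \le aZ+b$ and $e^{-sZ}\ge 0$, and noting $\E[Ze^{-sZ}]=G'(-s)$, $\E[e^{-sZ}]=G(-s)$, this yields $\mathrm{Ent}(e^{-sZ}) \le \phi(s)\bigl(aG'(-s)+bG(-s)\bigr)$. Since also $\mathrm{Ent}(e^{-sZ}) = -sG'(-s)-G(-s)\log G(-s)$, dividing by $G(-s)>0$ and passing from $\log G(-s)$ to $\Psi(s)=s\E[Z]+\log G(-s)$ converts this into the differential inequality
\[
\bigl(s+a\phi(s)\bigr)\Psi'(s)-\Psi(s) \le \bigl(a\E[Z]+b\bigr)\phi(s), \qquad \Psi(0)=\Psi'(0)=0 .
\]

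Finally, $\Psi$ is convex with $\Psi'(0)=0$, hence $\Psi'(s)\ge 0$, and $a\phi(s)\ge 0$, so the term $a\phi(s)\Psi'(s)$ may simply be dropped, leaving $s\Psi'(s)-\Psi(s)\le (a\E[Z]+b)\phi(s)$; equivalently $\bigl(\Psi(s)/s\bigr)' \le (a\E[Z]+b)\phi(s)/s^2$. Using the elementary Bernstein estimate $\phi(s)\le s^2/\bigl(2(1-s/3)\bigr)$ on $0\le s<3$ and integrating from $0$ gives
\[
\Psi(s) \le \frac{(a\E[Z]+b)\,s^2}{2\,(1-s/3)}, \qquad 0\le s<3,
\]
so $W$ is sub-gamma with variance factor $a\E[Z]+b$ and scale $1/3$; the standard sub-gamma Chernoff optimisation then delivers $\mathbb{P}(Z-\E[Z]\le -t) \le \exp\!\bigl(-t^2/(2(a\E[Z]+b+t/3))\bigr)$. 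I expect the only genuinely delicate point to be the per-coordinate entropy estimate, where the comparison value must be chosen as $e^{-sZ_i}$ precisely so that convexity of $\phi$ linearises the bound into $\sum_i h_i$, which is exactly what the self-bounding hypothesis controls; everything afterwards — discarding the correctly signed $a\phi(s)\Psi'(s)$, integrating, and optimising — is routine, and it is exactly this favourable sign that is absent for the upper tail, accounting for the asymmetry with Theorem~\ref{theorem:old_results_upper_tail}.
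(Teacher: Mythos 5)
Your derivation is correct, but you should know that the paper does not actually prove this theorem: it is quoted with a citation to McDiarmid and Reed (2006), so there is no ``paper's own proof'' to compare against line by line. What you have written is, however, a faithful reconstruction of the standard entropy-method argument for the lower tail, and it is sound at each step: the tensorisation of entropy, the per-coordinate variational estimate with the comparison value $u = e^{-sZ_i}$ (which indeed yields $\mathrm{Ent}_i(e^{-sZ}) \le \E_i[e^{-sZ}\phi(sh_i)]$, exactly the $\lambda = -s$ instance of the paper's Theorem~\ref{theorem:sobolev}), the convexity bound $\phi(sh_i) \le h_i\phi(s)$, the self-bounding bound $\sum_i h_i \le aZ + b$, the observation that $\Psi$ is convex with $\Psi'(0) = 0$ so $a\phi(s)\Psi'(s) \ge 0$ may be discarded, the integration of $(\Psi(s)/s)'$, the Bernstein bound on $\phi$, and the sub-gamma Chernoff step.

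It is worth contrasting your treatment with the lower-tail argument the paper does carry out, in its own Theorem~\ref{theorem:M_symmetry}. You establish the sign $\Psi'(s) \ge 0$ by convexity of the cumulant generating function; the paper reaches the equivalent fact ($G'(\lambda) \le 0$ for $\lambda < 0$, Corollary~\ref{corollary:cumulant_props}) via Harris's inequality (Theorem~\ref{theorem:Harris_inequality}). More importantly, the paper does \emph{not} discard the $a$-dependent term in the differential inequality. Instead it replaces $\psi(-\lambda M)$ by $\tfrac{(\lambda M)^2}{2}$ inside the coefficient multiplying $G'(\lambda)$ and keeps it, which is what produces the $at$ term in the denominator of the bound in Theorem~\ref{theorem:M_symmetry}(b), in place of the $2t/3$ you obtain from the Bernstein estimate on $\phi$. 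The two routes therefore yield genuinely different bounds: yours recovers McDiarmid--Reed's $\exp(-t^2/(2(a\E[Z]+b+t/3)))$ for all $t > 0$, while the paper's route gives $\exp(-t^2/(2(a\E[Z]+b)+at))$ but only for $0 < t \le \E[Z]$; neither dominates the other uniformly, and the paper remarks that its bound is the sharper one precisely when $a < 2/3$.
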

\section{Main results}
\begin{theorem}[Symmetry of concentration of ${(M,a,b)}$ self-bounding functions] 
Let $\Xv = (X_1,\dots,X_n)$ be a vector of independent random variables, each taking values in the set $\mathcal{X}$ and let $f: \mathcal{X}^n \to \mathbb{R}$ be a non-negative, $(M,a,b)$ self-bounding, and measurable function such that $Z = f(\Xv)$ has finite mean. Then for $\lambda<\frac{2}{aM}$, it follows that \label{theorem:M_symmetry}
\begin{align}\label{eq:cumulant_bound_symmetry}
             \log \E[e^{\lambda(Z-\E[Z])}] & \leq \dfrac{(a \E[Z] + b) M \lambda^2}{2\left(1 - \frac{a M\lambda}{2}\right)}.
\end{align}
Additionally, the following holds: 
\begin{enumerate}[label=(\alph*)]
	\item For all $t > 0$,
        \begin{align} \nonumber
            \mathbb{P}(Z - \E[Z] \geq t) \leq \exp\left(- \frac{1}{M} \frac{t^2}{2(a \E[Z] + b) + at}\right). 
        \end{align}
	\item For all $0 < t \leq \E[Z]$, 
		\begin{align} \nonumber
			\mathbb{P}(Z - \E[Z] \leq -t) \leq \exp \left(- \dfrac{1}{M} \dfrac{t^2}{2(a\E[Z] + b) + at}\right).
		\end{align}
	\end{enumerate}
\end{theorem}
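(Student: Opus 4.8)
The plan is to run the entropy method (Herbst's argument). Write $Z_i := f_i(\Xv^{(i)})$ and $v_i := Z - Z_i$, so that $0 \le v_i \le M$ and $\sum_{i=1}^n v_i \le aZ + b$ by Definition~\ref{def:(M,a,b)self-bounding}. The starting point is the sub-additivity (tensorisation) of entropy together with the modified logarithmic Sobolev inequality: since $Z_i$ does not depend on $X_i$, taking $u = e^{\lambda Z_i}$ in the variational upper bound for the entropy functional $\mathrm{Ent}(\cdot)$ and summing the one-coordinate estimates gives, for every $\lambda \in \mathbb{R}$,
\[
\mathrm{Ent}\big(e^{\lambda Z}\big) \;\le\; \sum_{i=1}^{n} \E\big[e^{\lambda Z}\,\psi(-\lambda v_i)\big], \qquad \psi(u) := e^{u} - u - 1 .
\]

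For $\lambda \ge 0$ the argument closes at once: here $-\lambda v_i \le 0$, so $\psi(-\lambda v_i) \le \tfrac12 \lambda^2 v_i^2$, and since $v_i \in [0,M]$ we have $v_i^2 \le M v_i$; summing and invoking $\sum_i v_i \le aZ+b$ yields $\mathrm{Ent}(e^{\lambda Z}) \le \tfrac{M\lambda^2}{2}\,\E[(aZ+b)e^{\lambda Z}]$. Writing $F(\lambda) := \log \E[e^{\lambda Z}]$ and $G(\lambda) := F(\lambda) - \lambda\E[Z] = \log\E[e^{\lambda(Z-\E[Z])}]$ (so that $G(0)=0$ and $G(\lambda)/\lambda \to 0$ as $\lambda\to 0$), and using $\mathrm{Ent}(e^{\lambda Z}) = \E[e^{\lambda Z}]\,(\lambda F'(\lambda) - F(\lambda))$ together with $\E[(aZ+b)e^{\lambda Z}] = \E[e^{\lambda Z}](aF'(\lambda)+b)$, this becomes the differential inequality
\[
\lambda\Big(1 - \tfrac{aM\lambda}{2}\Big) G'(\lambda) \;\le\; G(\lambda) + \tfrac{M\lambda^2}{2}\big(a\E[Z] + b\big).
\]
The key observation is that, after dividing by $\lambda^2$, the left-hand side is precisely $\frac{d}{d\lambda}\big[(1 - \tfrac{aM\lambda}{2})\,G(\lambda)/\lambda\big]$; integrating from $0$ and using the initial condition gives $(1 - \tfrac{aM\lambda}{2})\,G(\lambda)/\lambda \le \tfrac{M\lambda}{2}(a\E[Z]+b)$, which rearranges to \eqref{eq:cumulant_bound_symmetry} on $0 \le \lambda < 2/(aM)$.

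The substantive part is the case $\lambda < 0$, where now $-\lambda v_i \ge 0$ and the bound $\psi(u) \le u^2/2$ no longer holds; the best elementary estimate, from monotonicity of $x\mapsto\psi(x)/x^2$ on $[0,-\lambda M]$, is $\psi(-\lambda v_i)\le \tfrac{\psi(-\lambda M)}{M}v_i$, which carries a spurious factor growing like $e^{|\lambda|M}$ and is too lossy to reproduce \eqref{eq:cumulant_bound_symmetry}. I expect closing this gap to be the main obstacle, and it is exactly where Theorem~\ref{theorem:Harris_inequality} is used: the Harris/association inequality is invoked to control the correlation between $e^{\lambda Z}$ (a non-increasing function of $Z$, since $\lambda<0$) and the non-negative increments $v_i$, which is what allows the offending exponential factor to be removed so that the same bound $\mathrm{Ent}(e^{\lambda Z})\le\tfrac{M\lambda^2}{2}\,\E[(aZ+b)e^{\lambda Z}]$, and hence the same differential inequality, is recovered for $\lambda<0$; integrating that inequality from $\lambda$ up to $0$ then gives \eqref{eq:cumulant_bound_symmetry} for all $\lambda<0$, and explains why a symmetric statement needs Harris's inequality and not merely the modified log-Sobolev inequality.

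Finally, parts (a) and (b) follow from \eqref{eq:cumulant_bound_symmetry} by Chernoff's bound. For (a), minimising $\exp(-\lambda t + G(\lambda))$ over $0 < \lambda < 2/(aM)$ is the standard sub-gamma computation with variance factor $v := (a\E[Z]+b)M$ and scale $c := aM/2$; the choice $\lambda = t/(v+ct)$ lies in the admissible range and gives $\exp\big(-\tfrac{t^2}{2(v+ct)}\big) = \exp\big(-\tfrac1M\tfrac{t^2}{2(a\E[Z]+b)+at}\big)$. For (b), since \eqref{eq:cumulant_bound_symmetry} holds for every $\lambda<0$, applying Chernoff to $-(Z-\E[Z])$ gives $\mathbb{P}(Z-\E[Z]\le -t) \le \exp\big(-\mu t + \tfrac{v\mu^2}{2(1+c\mu)}\big)$ for all $\mu>0$, and since $1+c\mu$ is at least the $1-c\mu$ appearing in the sub-gamma case, the same minimisation yields the same bound. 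The hypothesis $0 < t \le \E[Z]$ is natural here, as $Z \ge 0$ forces $\mathbb{P}(Z-\E[Z]\le -t)=0$ once $t>\E[Z]$.
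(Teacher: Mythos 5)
Your overall structure — modified log--Sobolev inequality, reduction to a linear first-order differential inequality for $(1/\lambda - aM/2)G(\lambda)$, then Chernoff — matches the paper, and your treatment of $\lambda>0$ is complete and essentially identical to theirs: the shortcut $\psi(-\lambda v_i)\le\tfrac12\lambda^2 v_i^2\le\tfrac12 M\lambda^2 v_i$ is the composition of the paper's Lemma~\ref{lemma:alpha_increasing} with $\psi(-M\lambda)\le(M\lambda)^2/2$.

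The gap is the case $\lambda<0$, which you rightly flag as the substantive step but then dispatch with a mechanism that does not exist. You assert that Harris's inequality, applied to the nonincreasing function $e^{\lambda Z}$ and the increments $v_i$, removes the offending exponential factor so that the \emph{same} entropy bound $\mathrm{Ent}(e^{\lambda Z})\le\tfrac{M\lambda^2}{2}\E[(aZ+b)e^{\lambda Z}]$, and hence the same ODE, holds for $\lambda<0$. This cannot be right for two reasons. First, Harris/association requires both factors to be monotone functions of the underlying coordinates; the $v_i=Z-Z_i$ are not in general monotone, so the pair $(e^{\lambda Z},v_i)$ is not admissible. Second, the entropy bound you want is in fact false for $\lambda<0$: the pointwise estimate $\psi(-\lambda v_i)\le\tfrac12 M\lambda^2 v_i$ fails there, since the left side grows like $e^{|\lambda|v_i}$, and no positivity argument can repair a pointwise inequality that is wrong. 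What the paper actually uses Harris for (Corollary~\ref{corollary:cumulant_props}) is the single fact that $G'(\lambda)\le 0$ for $\lambda<0$. The differential inequality obtained from Theorem~\ref{theorem:sobolev} retains $\psi(-M\lambda)$ in both places,
\begin{align*}
\Big(\lambda - a\tfrac{\psi(-M\lambda)}{M}\Big)G'(\lambda) - G(\lambda) \le (a\E[Z]+b)\tfrac{\psi(-M\lambda)}{M},
\end{align*}
and the sign $G'(\lambda)\le 0$, combined with $\psi(-M\lambda)\ge\tfrac{(M\lambda)^2}{2}$ for $\lambda<0$, lets one replace the \emph{coefficient of $G'$} by the quadratic $\lambda-\tfrac{aM\lambda^2}{2}$ (multiplying a chain of lower bounds by a nonpositive $G'$ reverses them). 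The exponential $\psi(-M\lambda)$ remains on the right-hand side; it is not removed, and the subsequent integration from $\lambda$ to $0$ is genuinely different from the $\lambda>0$ case. So your claim that ``the same differential inequality is recovered'' is not the paper's argument, and your proposal does not establish~\eqref{eq:cumulant_bound_symmetry} for $\lambda<0$.

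Two further cautions. In (b) you minimise over all $\mu>0$, but the optimiser must stay where the cumulant bound is valid; that is where the restriction $0<t\le\E[Z]$ actually enters, not merely because $Z\ge 0$. And if you write out the $\lambda<0$ integration in full, scrutinise the step where $\int_\lambda^0 \tfrac{\psi(-sM)}{Ms^2}\,ds$ is compared with $-\tfrac{M\lambda}{2}$: the integrand satisfies $\tfrac{\psi(-sM)}{Ms^2}\ge\tfrac{M}{2}$ for $s<0$, so the integral is bounded \emph{below}, not above, by $-\tfrac{M\lambda}{2}$, and a direct pointwise estimate does not deliver the claimed quadratic bound on $G(\lambda)$ for all $\lambda<0$. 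This is a real obstruction, not a technicality, and any complete proof has to address it.
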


The proof of Theorem \ref{theorem:M_symmetry} relies on an updated inequality bound shown in Lemma \ref{lemma:alpha_increasing}. To prove the new lower bound, we leverage Theorem \ref{theorem:Harris_inequality}. Previous approaches in the literature use only one part of Harris's inequality (typically for the upper tail); however, we leverage both inequalities to obtain both tail results.

\begin{figure}[tbh!]
    \centering
    \includegraphics[width=0.75\linewidth]{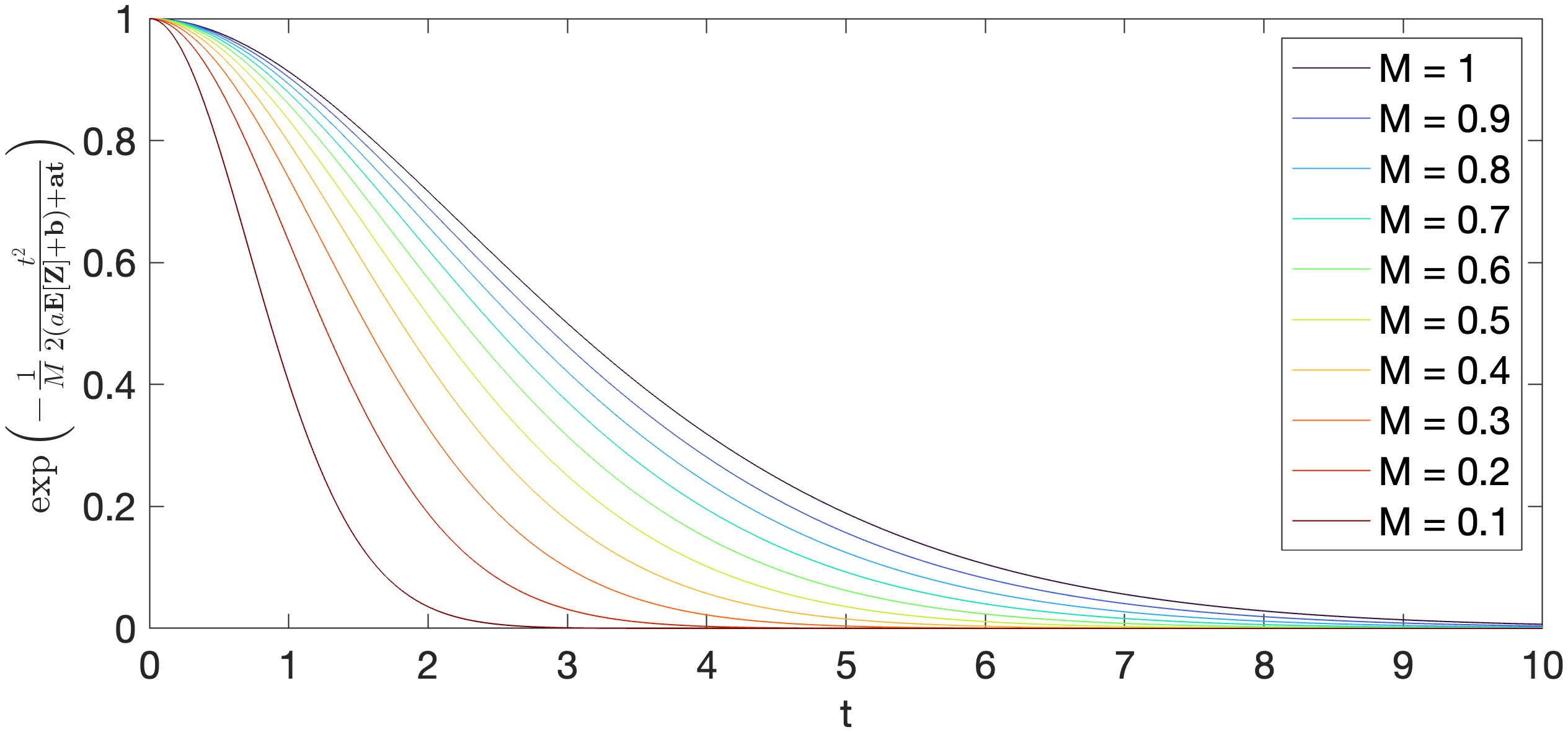}
    \caption{The bounds from Theorem \ref{theorem:M_symmetry} as shown with varying $M$, applied to the example $a=1, b = 0$ and $\E[Z] = 5$.}
    \label{fig:bounds_example}
\end{figure}

\begin{theorem}[Improved concentration of ${(M,a,b)}$ self-bounding functions] \label{theorem:M_improved}
Assume the assumptions for Theorem \ref{theorem:M_symmetry} hold. Define the functions $\delta(a,M)_+$ and $\delta(a,M)_-$ as
\begin{align} \label{eq:deltas}
    \delta(a,M)_+ \coloneq \begin{cases} 
                    0, & a \leq \frac{1}{3}, \\
                    a - \frac{1}{3}, & \frac{1}{2} < M \leq 1, \\
                    a - \frac{1}{3}, & M > 1 \ \text{and} \ a < \frac{M}{3(M-1)}, \\ 
                    a, & \text{otherwise.}
                \end{cases}, \quad  \delta(a,M)_- \coloneq \begin{cases} 
                    0, & a \geq \frac{1}{3}, \\ 
                    a, & \text{otherwise.}
                \end{cases}
\end{align}
Then the following holds:
\begin{enumerate}[label=(\alph*)] 
	\item For all $t > 0$,
        \begin{align} \nonumber
            \mathbb{P}(Z - \E[Z] \geq t) \leq \exp\left(- \frac{1}{M} \frac{t^2}{2(a \E[Z] + b) + \delta(a,M)_+ t}\right). 
        \end{align}
	\item For all $0 < t \leq \E[Z]$, 
		\begin{align} \nonumber
			\mathbb{P}(Z - \E[Z] \leq -t) \leq \exp \left(- \dfrac{1}{M} \dfrac{t^2}{2(a\E[Z] + b) + \delta(a,M)_-t}\right).
		\end{align}
\end{enumerate}
\end{theorem}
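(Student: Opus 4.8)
The plan is to strengthen the cumulant bound \eqref{eq:cumulant_bound_symmetry} and then pass to the tail bounds by the standard Cram\'er--Chernoff argument. Concretely, the target is to show that for $\lambda>0$
\[
\log \E[e^{\lambda(Z-\E[Z])}]\le\frac{(a\E[Z]+b)M\lambda^2}{2\bigl(1-\tfrac{M\delta(a,M)_+}{2}\lambda\bigr)},
\]
valid on the appropriate interval, and, writing $\mu=-\lambda>0$, that $\log\E[e^{-\mu(Z-\E[Z])}]\le\tfrac{(a\E[Z]+b)M\mu^2}{2(1-M\delta(a,M)_-\mu/2)}$ (a genuine sub-gamma bound with scale $M\delta(a,M)_-/2$, which degenerates to the sub-Gaussian bound $\tfrac{(a\E[Z]+b)M\mu^2}{2}$ when $\delta(a,M)_-=0$). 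Given these, parts (a) and (b) follow at once: a cumulant bound of the shape $\tfrac{v\lambda^2}{2(1-c\lambda)}$ yields a tail bound $\exp\!\bigl(-\tfrac{t^2}{2(v+ct)}\bigr)$ (via $\tfrac{v}{c^2}h(ct/v)\ge\tfrac{t^2}{2(v+ct)}$ with $h(x)=1+x-\sqrt{1+2x}$), and substituting $v=(a\E[Z]+b)M$ and $c=M\delta(a,M)_\pm/2$ and cancelling a factor of $M$ gives exactly the stated inequalities; the restriction $0<t\le\E[Z]$ in (b) keeps us inside the admissible Chernoff range, and for $t>\E[Z]$ the left tail vanishes since $Z\ge0$.

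For the upper-tail cumulant bound I would follow the entropy method as in the proof of Theorem \ref{theorem:M_symmetry}. The modified logarithmic Sobolev inequality gives $\mathrm{Ent}[e^{\lambda Z}]\le\sum_i\E[e^{\lambda Z}\varphi(-\lambda(Z-Z_i))]$ with $\varphi(u)=e^u-u-1$ and $Z_i=f_i(\Xv^{(i)})$; since $0\le Z-Z_i\le M$, convexity of $\varphi$ gives $\varphi(-\lambda(Z-Z_i))\le\tfrac{Z-Z_i}{M}\varphi(-\lambda M)$, hence $\sum_i\varphi(-\lambda(Z-Z_i))\le\tfrac{\varphi(-\lambda M)}{M}(aZ+b)$ by $(M,a,b)$ self-boundedness. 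Theorem \ref{theorem:M_symmetry} corresponds to crudely replacing $\varphi(-\lambda M)$ by $\tfrac{(\lambda M)^2}{2}$; the improvement comes from retaining $\varphi(-\lambda M)$ and applying a sharper elementary estimate such as $\varphi(-y)\le\tfrac{y^2}{2(1+y/3)}$ — the $M$-scaled analogue of the $c=(3a-1)/6$ bound of \cite{boucheronConcentrationSelfboundingFunctions2009}, and the point where Lemma \ref{lemma:alpha_increasing} (or a companion monotonicity estimate for $\varphi$) enters. Feeding this into the resulting differential inequality for $G(\lambda)=\log\E[e^{\lambda Z}]$ and integrating from $0$ (using $G(0)=0$, $G'(0)=\E[Z]$) produces a sub-gamma bound whose scale parameter is smaller than $aM/2$ precisely when the sharper estimate beats $\tfrac{(\lambda M)^2}{2}$ uniformly over the admissible range of $\lambda$ and the solution stays finite; tracking this is exactly what generates the case split in \eqref{eq:deltas}, i.e.\ $\delta(a,M)_+=0$ when $a\le\tfrac13$, $\delta(a,M)_+=a-\tfrac13$ when $a>\tfrac13$ and either $\tfrac12<M\le1$ or ($M>1$ and $a<\tfrac{M}{3(M-1)}$), and $\delta(a,M)_+=a$ (no improvement over Theorem \ref{theorem:M_symmetry}) otherwise.

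The lower tail requires a genuinely different argument, because for $\lambda<0$ the function $\varphi$ is evaluated at a non-negative argument, where $\varphi(y)\ge\tfrac{y^2}{2}$, so the convexity route alone does not even recover sub-Gaussianity. Here I would invoke Theorem \ref{theorem:Harris_inequality} and, unlike the typical one-sided use in the literature, both of the correlation inequalities it provides: one of them decouples the self-bounding term $aZ+b$ from $e^{\lambda Z}$ (for $\lambda<0$ these vary monotonically in opposite senses) so that the entropy bound becomes $\mathrm{Ent}[e^{\lambda Z}]\le\tfrac{a\E[Z]+b}{M}\varphi(|\lambda|M)\E[e^{\lambda Z}]$, and, together with the companion inequality, the resulting differential inequality for $\log\E[e^{-\mu(Z-\E[Z])}]$ collapses to the sub-Gaussian one as soon as $a\ge\tfrac13$, and otherwise keeps a scale of order $a$; this is the source of $\delta(a,M)_-$ in \eqref{eq:deltas}.

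I expect the main obstacle to be the second paragraph: establishing the $M$-scaled version of the refined function inequality and, above all, pinning down exactly the region of $(a,M)$ on which the refined estimate yields an honest improvement. The difference from \cite{boucheronConcentrationSelfboundingFunctions2009} is that here the argument of $\varphi$ ranges over $[0,\lambda M]$ rather than $[0,\lambda]$, so the truncation level is coupled to $\lambda$ through $M$; the monotonicity and convexity manipulations that in the $M=1$ setting cleanly deliver $c=(3a-1)/6$ must be redone with $M$ present, and it is this coupling — together with the requirement that the improved scale be consistent with the bound $\lambda<\tfrac{2}{aM}$ from Theorem \ref{theorem:M_symmetry} — that forces the multi-branch form of $\delta(a,M)_+$. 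Once both cumulant bounds are in place, the passage to (a) and (b) is the routine optimisation recalled in the first paragraph.
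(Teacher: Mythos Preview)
Your upper–tail plan is essentially the paper's: the entropy method yields the master differential inequality
\[
M\bigl(\lambda G'(\lambda)-G(\lambda)\bigr)\le \psi(-\lambda M)\bigl(aG'(\lambda)+a\E[Z]+b\bigr),
\]
and the refinement over Theorem~\ref{theorem:M_symmetry} comes from comparing $\psi(-\lambda M)$ to a sharper majorant than $(\lambda M)^2/2$. The paper packages the ``feed in and integrate'' step as an ODE comparison principle (Lemmas~\ref{lemma:Boucheron_ODE} and~\ref{lemma:boucheron_second_proof}): one fixes the target $G_\gamma(\lambda)=\tfrac{M\lambda^2}{2(1-\gamma M\lambda/2)}$, computes $\rho_\gamma(\lambda)=M(\lambda G_\gamma'-G_\gamma)/(aG_\gamma'+1)$, and then must check \emph{two} things: (i) $\psi(-\lambda M)\le\rho_\gamma(\lambda)$ and $1+aG_\gamma'(\lambda)>0$ on an interval $I$; and (ii) the Chernoff–optimal $\lambda$ actually lies in $I$. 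Your proposal only alludes to (i); the constraint $M>\tfrac12$ in the second branch of $\delta(a,M)_+$ is produced by (ii), not by (i), and you will not recover it from the inequality $\varphi(-y)\le y^2/(2(1+y/3))$ alone. So the upper tail is on the right track but missing the second mechanism that generates the case split.

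The lower–tail plan has a genuine gap. Decoupling $aZ+b$ from $e^{\lambda Z}$ via the Chebyshev/Harris inequality gives only
\[
\lambda G'(\lambda)-G(\lambda)\ \le\ (a\E[Z]+b)\,\frac{\psi(-\lambda M)}{M},\qquad \lambda<0,
\]
and for $\lambda<0$ one has $\psi(-\lambda M)=e^{|\lambda|M}-|\lambda|M-1\ge(\lambda M)^2/2$ with strict inequality. Dividing by $\lambda^2$ and integrating yields a bound strictly larger than $(a\E[Z]+b)M\lambda^2/2$; the threshold $a\ge\tfrac13$ never appears because you have discarded the $-\,a\psi(-\lambda M)G'(\lambda)/M$ term from the left–hand side, and that term is precisely where $a$ interacts with the $1/3$ coming from the series of $\psi$. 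The paper does \emph{not} decouple: it keeps the full differential inequality and runs the same ODE comparison with $\gamma=0$. The requirement becomes $\psi(-\lambda M)\le\rho_0(\lambda)=\tfrac{M^2\lambda^2}{2(1+a\lambda M)}$ on $(-\tfrac{1}{aM},0)$, i.e.\ $e^{y}-y-1\le \tfrac{y^2}{2(1-ay)}$ for $y=-\lambda M\in(0,1/a)$, and this is exactly where $a\ge\tfrac13$ enters (via $e^{y}-y-1\le y^2/(2(1-y/3))$). So for part~(b) you need to abandon the decoupling idea and use the comparison principle instead; Harris is used only to guarantee $aG'(\lambda)+a\E[Z]+b>0$, the hypothesis of Lemma~\ref{lemma:boucheron_second_proof}, not to split the expectation.
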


\begin{remark} \label{remark:t_delta}
    For $0 < M \leq \frac{1}{2}$, $\delta(a,M)_+$ can be strengthened such that for any $a > \frac{1}{3}, \ \delta(a,M)_+ \coloneq a-\frac{1}{3}$, but the inequality only holds for
    \begin{align}
        t \leq \dfrac{a\E[Z]+b}{a-\frac{1}{3}}\left(\left( \dfrac{\sqrt{a^2M\left(1-M+\dfrac{2M}{3a} - \dfrac{1}{3a}\right)} + a(M-1)}{M(a-\frac{1}{3})-a}\right)^{-2}-1\right).
    \end{align}
    Note that as $M \to 0,$ the bound decreases to $0$.
\end{remark}

\begin{remark}
    In the case that $a < \frac{1}{3}$, then the result in Theorem \ref{theorem:M_symmetry} improves on Theorem \ref{theorem:old_results_lower_tail}, since their coefficient of $t$ is $\frac{2}{3}$ and our result holds for $a < \frac{1}{3}$.
\end{remark}

\section{Scaling} \label{section:scaling}

To validate the result for $M>1$, we show that there is indeed improved concentration in most instances for the upper tail, when compared to rescaling an $(M,a,b)$ self-bounding function to an $(a,b)$ self-bounding function. Let $f(\xv)$ be an $(M,a_1,b_1)$ self-bounding function and set $g(\xv) \coloneq \frac{f(\xv)}{M}$ so that $g(\xv)$ is $(a_2,b_2)$ self-bounding, i.e. it satisfies both
\begin{align} \nonumber
                0 \leq g(\xv) - g_{i}(\xv^{(i)}) & \leq 1, \\ \nonumber
                \sum_{i=1}^{n} (g(\xv) - g_{i}(\xv^{(i)})) & \leq a_2 g(\xv) + b_2.
\end{align} 
Multiplying the bottom equation by $M$ implies that $a_2 = a_1$ and $b_2 = M b_1$, so that $g$ is $(a_1,Mb_1)$ self-bounding. It follows that $\E[g(\Xv)] = \frac{1}{M} \E[Z]$. Hence, by Theorem \ref{theorem:old_results_upper_tail} with $M=1$ for $g$, it follows that
 \begin{align}\mathbb{P}\left(g(\Xv) - \E[g(\Xv)] \geq t \right) = \mathbb{P}\left(\frac{Z}{M} - \frac{\E[Z]}{M} \geq t\right)
			& \leq \exp\left(- \frac{t^2}{2(a_2 \frac{\E[Z]}{M} + b_2 + c_+ t)}\right) \\ 
			& = \exp\left(- \frac{t^2}{2(a_1 \frac{\E[Z]}{M} + Mb_1 + c_+ t)}\right).
        \end{align}
By applying Theorem \ref{theorem:M_improved},
\begin{align}
	\mathbb{P}\left(\frac{Z}{M} - \frac{\E[Z]}{M} \geq t \right) = \mathbb{P}(Z - \E[Z] \geq Mt)  & \leq \exp\left(- \frac{1}{M} \frac{M^2t^2}{2(a_1 \E[Z] + b_1) + \delta(a,M)_+ Mt}\right) \\ & = \exp\left(- \frac{M t^2}{2(a_1 \E[Z] + b_1) + \delta(a,M)_+Mt}\right) \\ & = \exp\left(- \frac{t^2}{\frac{2}{M}(a_1 \E[Z] + b_1) + \delta(a,M)_+ t}\right). 
\end{align}
It then remains to show that for $M>1$, which of the following expressions obtained from both self-bounding results obtains better bounds, i.e.
\begin{align}
	& \argmin_{M}\left(\exp\left(- \frac{t^2}{2(a_1 \frac{\E[Z]}{M} + Mb_1 + c_+ t)}\right), \exp\left(- \frac{t^2}{\frac{2}{M}(a_1 \E[Z] + b_1) + \delta(a,M)_+ t}\right)\right) \\ & = \argmax_{M} \left(\frac{t^2}{2(a_1 \frac{\E[Z]}{M} + Mb_1 + c_+ t)}, \frac{t^2}{\frac{2}{M}(a_1 \E[Z] + b_1) + \delta(a,M)_+ t} \right) \\ & = \argmin_{M} \left(2(a_1 \frac{\E[Z]}{M} + Mb_1 + c_+ t), \frac{2}{M}(a_1 \E[Z] + b_1) + \delta(a,M)_+ t \right) \\ & = \argmin_{M} \left(2Mb_1 + 2c_+ t, \frac{2b_1}{M} + \delta(a,M)_+ t \right).
\end{align}
    Assume $a_1 > \frac{M}{3(M-1)}$ such that $2c_+ = a_1-\frac{1}{3}$ and $\delta(a_1,M)_{+} =a_1$. Then, for
\begin{align}
    t \leq \dfrac{2b_1}{3}\left(M - \dfrac{1}{M}\right),
\end{align}
the bound in Theorem \ref{theorem:M_improved} is tighter. If $b_1 = 0$, then scaling exhibits the additional term $-t/3$. When $\delta(a,M)_+ = 2c_+$, the bound in Theorem \ref{theorem:M_improved} is always less than the scaling result when $b_1 > 0$, or equal to the scaling result when $b_1 = 0$. For the lower tail, it is evident that Theorem \ref{theorem:M_improved} obtains better concentration than Theorem \ref{theorem:old_results_lower_tail} even when rescaling.

\section{Proofs}
\subsection{Proof of Theorem \ref{theorem:M_symmetry}}

The following results are needed for the proof of Theorem \ref{theorem:M_symmetry}.

\begin{theorem}[A modified Logarithmic Sobolev inequality \cite{boucheronConcentrationInequalitiesNonasymptotic2013}] \label{theorem:sobolev}
	Let $\Xv = (X_1,\dots,X_n)$ be a vector of independent random variables, each taking values in some measurable space $\mathcal{X}$. Let $f: \mathcal{X}^n \to \mathbb{R}$ be measurable and let $Z = f(\Xv)$. Let $\Xv^{(i)} = (X_1,\dots,X_{i-1},X_{i+1},\dots,X_n)$ and let $Z_i$ denote a measurable function of $\Xv^{(i)}$. Define $\psi(x) = e^{x} - x - 1$. Then for any $\lambda \in \mathbb{R}$,
	\begin{align} \label{eq:sobolev_eq}
		\lambda \E[Z e^{\lambda Z}] - \E[e^{\lambda Z}] \log \E[e^{\lambda Z}] \leq \sum_{i=1}^{n} \E[e^{\lambda Z} \psi (-\lambda(Z -  Z_i))].
	\end{align}
\end{theorem}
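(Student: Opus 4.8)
The plan is to recognise the left-hand side of \eqref{eq:sobolev_eq} as an entropy functional and then reduce to a one-coordinate estimate via the subadditivity (tensorisation) of entropy. For a non-negative integrable random variable $Y$ write $\mathrm{Ent}(Y) \coloneq \E[Y\log Y] - \E[Y]\log\E[Y]$. Taking $Y = e^{\lambda Z}$ gives $\mathrm{Ent}(e^{\lambda Z}) = \lambda\E[Z e^{\lambda Z}] - \E[e^{\lambda Z}]\log\E[e^{\lambda Z}]$, which is exactly the left-hand side of \eqref{eq:sobolev_eq}. Hence it suffices to bound $\mathrm{Ent}(e^{\lambda Z})$ from above by the stated sum. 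By the subadditivity of entropy for functions of independent random variables (a standard consequence of the duality formula for entropy, or equivalently of Han's inequality for relative entropy; see \cite{boucheronConcentrationInequalitiesNonasymptotic2013}), one has
\begin{align} \nonumber
    \mathrm{Ent}(e^{\lambda Z}) \leq \sum_{i=1}^n \E\bigl[\mathrm{Ent}^{(i)}(e^{\lambda Z})\bigr],
\end{align}
where $\mathrm{Ent}^{(i)}$ denotes the entropy functional computed with respect to the law of $X_i$ only, with $\Xv^{(i)}$ held fixed; equivalently, with $\E^{(i)}$ denoting expectation over $X_i$ alone, $\mathrm{Ent}^{(i)}(e^{\lambda Z}) = \E^{(i)}[\lambda Z e^{\lambda Z}] - \E^{(i)}[e^{\lambda Z}]\log\E^{(i)}[e^{\lambda Z}]$.

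Next I would bound each conditional term using the variational characterisation of entropy: for any non-negative random variable $Y$ and any constant $u > 0$,
\begin{align} \nonumber
    \mathrm{Ent}(Y) \leq \E\bigl[Y(\log Y - \log u) - (Y - u)\bigr],
\end{align}
with equality at $u = \E[Y]$. This holds because $u \mapsto \E[Y(\log Y - \log u) - (Y - u)]$ is minimised at $u = \E[Y]$, and each integrand is non-negative since $y\log(y/u) - (y-u) \geq 0$ for all $y,u > 0$. Applying this conditionally on $\Xv^{(i)}$ with $Y = e^{\lambda Z}$ and the choice $u = e^{\lambda Z_i}$ — legitimate precisely because $Z_i$ is a measurable function of $\Xv^{(i)}$ alone, hence a constant once $\Xv^{(i)}$ is fixed — yields
\begin{align} \nonumber
    \mathrm{Ent}^{(i)}(e^{\lambda Z}) \leq \E^{(i)}\Bigl[e^{\lambda Z}\bigl(\lambda Z - \lambda Z_i\bigr) - \bigl(e^{\lambda Z} - e^{\lambda Z_i}\bigr)\Bigr].
\end{align}
Factoring $e^{\lambda Z}$ out of the bracket gives $e^{\lambda Z}\bigl[\lambda(Z - Z_i) - 1 + e^{-\lambda(Z-Z_i)}\bigr] = e^{\lambda Z}\,\psi\!\left(-\lambda(Z - Z_i)\right)$, which matches the definition $\psi(x) = e^x - x - 1$ exactly.

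Finally I would substitute this bound back, take the expectation over $\Xv^{(i)}$, and sum over $i$, obtaining $\mathrm{Ent}(e^{\lambda Z}) \leq \sum_{i=1}^n \E[e^{\lambda Z}\psi(-\lambda(Z-Z_i))]$, which is \eqref{eq:sobolev_eq}. The main obstacle is the subadditivity-of-entropy step: proving it from scratch requires either the Legendre duality formula $\mathrm{Ent}(Y) = \sup\{\E[YV] : \E[e^{V}] \leq 1\}$ combined with a conditioning argument, or a direct telescoping argument over the coordinates using conditional expectations and Jensen's inequality. A secondary point needing care is integrability: one must ensure $\E[e^{\lambda Z}]$, $\E[Z e^{\lambda Z}]$ and each $\E[e^{\lambda Z}\psi(-\lambda(Z-Z_i))]$ are finite or else read the inequality in $[0,\infty]$, and invoke the measurability assumptions on $f$ and on the $Z_i$ stated in the paper's setup so the conditional entropies are well-defined; since $\psi \geq 0$ and $e^{\lambda Z} \geq 0$, the right-hand side is in any case a well-defined element of $[0,\infty]$.
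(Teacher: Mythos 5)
The paper does not present a proof of this theorem: it is stated as a known result and cited directly from Boucheron, Lugosi, and Massart \cite{boucheronConcentrationInequalitiesNonasymptotic2013}. Your argument is correct and is precisely the standard proof given in that reference: (i) identify the left side as $\mathrm{Ent}(e^{\lambda Z})$, (ii) invoke tensorisation/subadditivity of entropy for functions of independent variables, and (iii) apply the variational inequality $\mathrm{Ent}(Y)\le \E[Y(\log Y-\log u)-(Y-u)]$ conditionally with the admissible choice $u=e^{\lambda Z_i}$ (constant given $\Xv^{(i)}$, since $Z_i$ depends only on $\Xv^{(i)}$), which factors into $\E^{(i)}[e^{\lambda Z}\psi(-\lambda(Z-Z_i))]$. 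Your attention to the sign of $\psi$ and to integrability so that the inequality is well-posed in $[0,\infty]$ is appropriate. In short, your proof fills in exactly the argument the paper relies on by citation; there is no gap and no genuinely different route here.
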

In the following, we set $Z_i \coloneq f_{i}(\Xv^{(i)})$ as in (\ref{eq:inf}).

\begin{lemma} \label{lemma:alpha_increasing}
Define $\alpha (x,\lambda) \coloneq \frac{\psi(-\lambda x)}{x \psi(-\lambda)}$ for $x \in (0,M]$. Then $\alpha(x,\lambda) \leq \alpha(M,\lambda)$ for $\lambda \in \mathbb{R} \setminus\{0\}$.
\end{lemma}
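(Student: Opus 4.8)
The plan is to exploit two elementary facts about $\psi(x)=e^{x}-x-1$: it is convex on $\mathbb{R}$ (since $\psi''(x)=e^{x}>0$) and it vanishes at the origin, $\psi(0)=0$, being strictly positive elsewhere. First I would record that $\psi(-\lambda)>0$ for every $\lambda\neq 0$, so that dividing an inequality through by $\psi(-\lambda)$ preserves its direction; it therefore suffices to prove the sign-free statement that $x\mapsto \psi(-\lambda x)/x$ is maximised at $x=M$ on $(0,M]$.

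For the main step, fix $\lambda\in\mathbb{R}\setminus\{0\}$, take $x\in(0,M]$, and set $\theta\coloneq x/M\in(0,1]$, so that $-\lambda x=\theta\,(-\lambda M)+(1-\theta)\cdot 0$. Convexity of $\psi$ together with $\psi(0)=0$ gives
\[
\psi(-\lambda x)\;\le\;\theta\,\psi(-\lambda M)+(1-\theta)\,\psi(0)\;=\;\frac{x}{M}\,\psi(-\lambda M).
\]
Dividing by $x>0$ yields $\psi(-\lambda x)/x\le \psi(-\lambda M)/M$, and dividing further by $\psi(-\lambda)>0$ gives exactly $\alpha(x,\lambda)\le\alpha(M,\lambda)$. (Equivalently, one may run a calculus argument: writing $\phi(x)\coloneq\psi(-\lambda x)/x$, a short computation gives $\phi'(x)=x^{-2}\bigl(1-e^{-\lambda x}(1+\lambda x)\bigr)$, which is nonnegative because $1+v\le e^{v}$ for all real $v$; hence $\phi$ is nondecreasing on $(0,\infty)$ and $\phi(x)\le\phi(M)$ for $x\le M$.)

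I do not expect a substantive obstacle here: the content is essentially the standard principle that for a convex function vanishing at $0$ the difference quotient through the origin is monotone along a ray, specialised to the segment from $0$ to $-\lambda M$. The only point needing a word of care is the very last division, where one must note that $\psi(-\lambda)$ is strictly positive for $\lambda\neq0$ so that no case split on the sign of $\lambda$ is required — the convexity inequality above is already sign-agnostic in $\lambda$.
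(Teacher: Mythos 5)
Your proof is correct, and your main argument is a genuinely different route from the paper's. The paper proceeds computationally: it differentiates $\alpha(\cdot,\lambda)$ in $x$, obtaining
\begin{align*}
\frac{\partial \alpha(x,\lambda)}{\partial x} \;=\; \frac{\bigl(e^{\lambda x}-\lambda x-1\bigr)e^{\lambda-\lambda x}}{\bigl((\lambda-1)e^{\lambda}+1\bigr)x^{2}},
\end{align*}
and then argues that the numerator factor $e^{\lambda x}-\lambda x-1=\psi(\lambda x)$ is nonnegative and the denominator factor $(\lambda-1)e^\lambda+1 = e^\lambda\psi(-\lambda)$ is positive for $\lambda\neq 0$, so the derivative is nonnegative. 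Your primary argument instead invokes only the convexity of $\psi$ and $\psi(0)=0$, writing $-\lambda x = \theta(-\lambda M) + (1-\theta)\cdot 0$ with $\theta = x/M$ to obtain $\psi(-\lambda x)\le \theta\,\psi(-\lambda M)$ — the standard secant-through-the-origin inequality — and then dividing through. This avoids the derivative computation entirely, is shorter, and makes manifest that the argument is sign-agnostic in $\lambda$; the paper's route requires noticing separately that $\psi(\lambda x)\ge 0$ and that $e^\lambda\psi(-\lambda)>0$. Your parenthetical calculus alternative, $\phi'(x)=x^{-2}\bigl(1-e^{-\lambda x}(1+\lambda x)\bigr)\ge 0$ via $1+v\le e^v$, is essentially the paper's derivative argument in a slightly cleaner normalisation (it is the same expression, after cancelling the factor $e^\lambda$ from numerator and denominator). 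Both of your arguments are sound; the only point one might add for completeness is the explicit remark (which you do make) that strict positivity of $\psi(-\lambda)$ for $\lambda\neq 0$ follows from strict convexity of $\psi$ with unique minimum $\psi(0)=0$.
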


\begin{proof}
It follows that the partial derivative of $\alpha(x,\lambda)$ for any fixed $\lambda \neq 0$ is given by
\begin{align} \dfrac{\, \partial \alpha(x,\lambda)}{\, \partial x} = \dfrac{(e^{\lambda x} - \lambda x - 1)e^{\lambda - \lambda x}}{((\lambda -1)e^{\lambda} + 1)x^2}.  \end{align}
Note that $e^{\lambda x} - \lambda x - 1$ is convex, since the second derivative = $\lambda^2 e^{\lambda x} \geq 0$, and is increasing for $x\geq0$. Similarly $(\lambda -1)e^{\lambda} + 1 > 0$ for all $\lambda \in \mathbb{R}\setminus\{0\}$ since its minimum is at $\lambda = 0$ which gives the value $0$. Since all terms are positive, then $\alpha(x,\lambda)$ is an increasing function in $x$, and the result follows. This concludes the proof.    
\end{proof}

\begin{remark} \label{remark:alpha_M}
Assume $Z - Z_i \leq M$. It then follows from Lemma \ref{lemma:alpha_increasing} that for $x \in (0,M]$,
\begin{align} \alpha (x,\lambda) \leq \alpha (M,\lambda) \implies 
\label{eq:alpha_inequality} \psi(-\lambda x) \leq x \frac{\psi(-\lambda M)}{M}. \end{align}
\end{remark}

\begin{theorem}[Harris's inequality \cite{boucheronConcentrationInequalitiesNonasymptotic2013}] \label{theorem:Harris_inequality} Let $f,g : \mathbb{R}^n \to \mathbb{R}$ be nondecreasing real-valued functions. Let $X_1,\dots,X_n$ be independent real-valued random variables and let $ \Xv \coloneq (X_1,\dots,X_n)$ be the vector of $n$ random variables taking value in $\mathbb{R}^n$. Then
	\begin{align}
		\E[f(\Xv) g(\Xv)] \geq \E[f(\Xv)] \E[g(\Xv)].
	\end{align}
	If $f$ is nonincreasing and $g$ is nondecreasing, then 
	\begin{align}
		\E[f(\Xv) g(\Xv)] \leq \E[f(\Xv)] \E[g(\Xv)].
	\end{align}
\end{theorem}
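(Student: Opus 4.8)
The plan is to prove the first inequality by induction on $n$ and then deduce the second by applying the first to $-f$. Throughout I assume the expectations in the statement are finite, so that all the manipulations below are legitimate.

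For the base case $n=1$, I would introduce an independent copy $Y$ of $X_1$ and use the elementary observation that, since $f$ and $g$ are both nondecreasing, the two numbers $f(X_1)-f(Y)$ and $g(X_1)-g(Y)$ always have the same sign; hence $(f(X_1)-f(Y))(g(X_1)-g(Y)) \geq 0$ pointwise. Taking expectations, expanding the product, and using that $X_1$ and $Y$ are independent and identically distributed collapses the four resulting terms into $2\big(\E[f(X_1)g(X_1)] - \E[f(X_1)]\E[g(X_1)]\big) \geq 0$, which is exactly the claim for $n=1$.

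For the inductive step, suppose the inequality holds for any collection of $n-1$ independent variables. Given $X_1,\dots,X_n$, I would freeze the last coordinate and define $F(x_n) \coloneq \E[f(X_1,\dots,X_{n-1},x_n)]$ and $G(x_n) \coloneq \E[g(X_1,\dots,X_{n-1},x_n)]$; both are nondecreasing in $x_n$ because $f$ and $g$ are monotone in that coordinate and expectation preserves the inequality. Applying the induction hypothesis to the $(n-1)$-variable nondecreasing functions $\xv^{(n)}\mapsto f(\xv^{(n)},x_n)$ and $\xv^{(n)}\mapsto g(\xv^{(n)},x_n)$ for each fixed $x_n$ gives $\E[f(\Xv)g(\Xv)\mid X_n = x_n] \geq F(x_n)G(x_n)$. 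Taking expectation over $X_n$ and then invoking the already-established $n=1$ case for the nondecreasing functions $F$ and $G$ yields $\E[f(\Xv)g(\Xv)] \geq \E[F(X_n)G(X_n)] \geq \E[F(X_n)]\E[G(X_n)] = \E[f(\Xv)]\E[g(\Xv)]$, where the final equality is the tower property of conditional expectation.

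For the mixed-monotonicity statement, I would note that if $f$ is nonincreasing then $-f$ is nondecreasing; applying the inequality just proved to $-f$ and $g$ gives $\E[-f(\Xv)g(\Xv)] \geq \E[-f(\Xv)]\E[g(\Xv)]$, and multiplying through by $-1$ gives $\E[f(\Xv)g(\Xv)] \leq \E[f(\Xv)]\E[g(\Xv)]$. The argument is short and essentially routine; the only point requiring genuine insight is the sign argument in the base case, which is where the entire result comes from, and the only point requiring a little care is checking that the partial expectations $F$ and $G$ inherit monotonicity and that the conditioning/tower-property interchange is valid — both of which hold under the stated finiteness of the expectations. I expect the base-case sign argument to be the crux, though it is brief.
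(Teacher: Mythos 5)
Your proof is correct, and it is the standard argument (Chebyshev's association inequality as the base case, followed by conditioning on one coordinate and induction, with the mixed case obtained by applying the result to $-f$). The paper itself does not give a proof of this theorem---it is stated and attributed to \cite{boucheronConcentrationInequalitiesNonasymptotic2013}---so there is no in-paper argument to compare against; your write-up essentially reproduces the proof one finds in that reference. The two points you flag as needing care (that $F$ and $G$ inherit monotonicity, and that the tower-property interchange is valid under the finiteness assumptions) are indeed the only nontrivial checks, and both are handled correctly.
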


The following observation is a consequence of Theorem \ref{theorem:Harris_inequality}. 

\begin{corollary} \label{corollary:cumulant_props} Define $G(\lambda) \coloneq \log \E[e^{\lambda(Z - \E(Z))}]$, and its derivative as
	\begin{align}
		G'(\lambda) = \dfrac{\E((Z - \E[Z])e^{\lambda (Z - \E[Z])})}{\E(e^{\lambda (Z - \E[Z])})}.
	\end{align}
	Then, the following holds:
	\begin{enumerate}[label=(\alph*)]
		\item $G(\lambda) \geq 0$ for all $\lambda \in \mathbb{R}$.
		\item $G'(\lambda) \geq 0$ for $\lambda > 0$.
		\item $G'(\lambda) \leq 0$ for $\lambda < 0$.
	\end{enumerate} 
\end{corollary}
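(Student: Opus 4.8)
The plan is to deduce all three parts from Harris's inequality (Theorem \ref{theorem:Harris_inequality}) used in the degenerate case $n=1$, with the single real-valued ``random variable'' taken to be $W \coloneq Z - \E[Z]$, which has $\E[W] = 0$. The point is that although $Z = f(\Xv)$ need not be a monotone function of the underlying coordinates $X_1,\dots,X_n$ (and $\mathcal{X}$ need not even be ordered), the maps $w \mapsto w$ and $w \mapsto e^{\lambda w}$ are monotone functions of the real variable $w$, so Theorem \ref{theorem:Harris_inequality} applies to the pair of functions $\mathbb{R}\to\mathbb{R}$ together with the law of $W$.

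First I would treat part (b). For $\lambda > 0$ both $w \mapsto w$ and $w \mapsto e^{\lambda w}$ are nondecreasing, so the first inequality of Theorem \ref{theorem:Harris_inequality} gives $\E[W e^{\lambda W}] \geq \E[W]\,\E[e^{\lambda W}] = 0$; dividing by $\E[e^{\lambda W}] > 0$ yields $G'(\lambda) \geq 0$. For part (c), when $\lambda < 0$ the map $w \mapsto e^{\lambda w}$ is nonincreasing while $w \mapsto w$ remains nondecreasing, so the second inequality of Theorem \ref{theorem:Harris_inequality} gives $\E[W e^{\lambda W}] \leq \E[W]\,\E[e^{\lambda W}] = 0$, hence $G'(\lambda) \leq 0$.

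For part (a) I would observe that $G(0) = \log \E[e^{0}] = 0$ and then combine (b) and (c): $G$ is nonincreasing on $(-\infty,0)$ and nondecreasing on $(0,\infty)$, so $\lambda = 0$ is a global minimiser and $G(\lambda) \geq G(0) = 0$ for every $\lambda$. (Equivalently, (a) is immediate from Jensen's inequality applied to the convex exponential, but phrasing it via (b)--(c) keeps the corollary self-contained.)

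The only real subtlety — and hence the main thing to be careful about — is the analytic bookkeeping: one must check that $\E[e^{\lambda W}]$ is finite on the relevant range of $\lambda$ and that $\lambda \mapsto \E[e^{\lambda W}]$ may be differentiated under the expectation to produce the stated formula for $G'$. This is where the boundedness built into the $(M,a,b)$ self-bounding hypothesis (needed in any case for Theorem \ref{theorem:M_symmetry}) is invoked, together with a routine dominated-convergence argument; beyond that, each part is a single application of one of the two halves of Harris's inequality.
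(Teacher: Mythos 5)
Your proof is correct and, for parts (b) and (c), is essentially the paper's own argument: the paper also invokes Harris's inequality in the degenerate one-variable form with the pair $x\mapsto e^{\lambda x}$ and $x\mapsto x$ applied to $Z$ (you merely recentre to $W=Z-\E[Z]$, which changes nothing). The only divergence is part (a), where the paper applies Jensen's inequality directly to the exponential, whereas you derive it from (b), (c) and $G(0)=0$ — both are one-line arguments and you explicitly note the Jensen route yourself, so this is a cosmetic rather than a structural difference.
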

\begin{proof}
	\begin{enumerate}[label=(\alph*)]
		\item[] 
        \!\!\!\!\!\!\!\!\!\!\!\!\!\!\!\!\!\!\!\!To prove ($a$), we note that by Jensen's inequality for convex functions $f$, it follows that
		\begin{align} \E[f(X)] \geq f(\E[X]). \end{align}
		Set $X = \lambda(Z - \E[Z])$ and $f(x) = e^{x}$, it then follows that
		\begin{align} \E[e^{\lambda(Z - \E[Z])}] \geq e^{\E[\lambda(Z - \E[Z])]} = e^{0} = 1. \end{align}
		By taking the logarithm on both sides, the first result follows.
		\item[] To prove ($b$), let $f(x) = e^{\lambda x}$ and $g(x) = x$. Then $f(x)$ is nondecreasing for $\lambda > 0$ and $g(x)$ is also nondecreasing. Then, by Theorem \ref{theorem:Harris_inequality}, it follows that
		\begin{align}
		%\E[f(Z) g(Z)] \geq \E[f(Z)] \E[g(Z)] \implies 
        \E[Ze^{\lambda Z}] - \E[Z] \E[e^{\lambda Z}] \geq 0,
		\end{align}
		and hence,
		\begin{align}
		G'(\lambda) & = \dfrac{\E[(Z - \E[Z])e^{\lambda (Z - \E[Z])}]}{\E[e^{\lambda (Z - \E[Z])}]} \\ & = \dfrac{e^{- \lambda \E[Z]}(\E[Ze^{\lambda Z}] - \E[Z]\E[e^{\lambda Z}])}{\E[e^{\lambda (Z - \E[Z])}]} \\ & \geq 0.
	\end{align}
		\item[] To prove (c), let $f$ and $g$ be as defined previously. For $\lambda < 0$, $f(x)$ is nonincreasing. Similarly by Theorem \ref{theorem:Harris_inequality}, it follows that
		\begin{align}
		%\E[f(Z) g(Z)] \leq \E[f(Z)] \E[g(Z)] \implies 
        \E[Ze^{\lambda Z}] - \E[Z] \E[e^{\lambda Z}] \leq 0,
		\end{align}
		and the result follows. This concludes the proof.
	\end{enumerate}
\end{proof}

\begin{theorem} \label{theorem:optimisation_symmetry} Let $s(x) \coloneq 1 + x - \sqrt{1+2x}$. The solution to the optimisation problem
    \begin{align}\label{eq:optimisation_upper}
        \sup_{0<\lambda< \frac{2}{aM}} t\lambda - \dfrac{(a \E[Z] + b) M \lambda^2}{2(1 - \frac{a M\lambda}{2})}
    \end{align}
    is given by
    \begin{align}
        \dfrac{4(a\E[Z]+b)}{a^2 M}s\left(\frac{at}{2(a\E[Z]+b)}\right),
    \end{align}
    where the value $\lambda$ that maximizes the supremum is given by
    \begin{align}\label{eq:optimisation_upper_lambda}
        \lambda = \dfrac{2}{aM}\left(1 - \left(1 + \dfrac{a t}{a\E[Z]+b}\right)^{-0.5}\right).
    \end{align}
Conversely, for $t \leq \E[Z]$, the solution to the optimisation problem
    \begin{align}\label{eq:optimisation_lower}
        \sup_{\lambda < 0} -t\lambda - \dfrac{(a \E[Z] + b) M \lambda^2}{2(1 - \frac{a M\lambda}{2})}
    \end{align}
    is given by
    \begin{align}
        \dfrac{4(a\E[Z]+b)}{a^2 M}s\left(-\frac{at}{2(a\E[Z]+b)}\right),
    \end{align}
    where the value of $\lambda$ that maximizes the supremum is given by\begin{align}\label{eq:optimisation_lower_lambda}
        \lambda^* = \dfrac{2}{aM}\left(1 - \left(1 - \dfrac{a t}{a\E[Z]+b}\right)^{-0.5}\right).
    \end{align}    
\end{theorem}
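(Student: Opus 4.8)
The plan is to collapse both optimisation problems into a single one-variable calculus exercise by the substitution $u := \tfrac{aM\lambda}{2}$, which carries the feasible set $0<\lambda<\tfrac{2}{aM}$ to $u\in(0,1)$ and the feasible set $\lambda<0$ to $u\in(-\infty,0)$. Writing $v := a\E[Z]+b>0$ (the degenerate case $v=0$ being implicitly excluded, since the claimed optima contain $a^2$ in a denominator), a direct computation gives, for the upper-tail objective,
\[
 t\lambda - \frac{vM\lambda^2}{2\bigl(1-\frac{aM\lambda}{2}\bigr)} \;=\; \frac{2v}{a^2M}\left(\frac{at}{v}\,u - \frac{u^2}{1-u}\right),
\]
and for the lower-tail objective the same expression with $\tfrac{at}{v}u$ replaced by $-\tfrac{at}{v}u$. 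Setting $c := \tfrac{at}{v} = \tfrac{at}{a\E[Z]+b}$, it thus suffices to maximise $g_{\pm}(u) := \pm c\,u - \tfrac{u^2}{1-u}$ over $u<1$ and to multiply the answer by the prefactor $\tfrac{2v}{a^2M}$.

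Next I would differentiate. Using the identity $\tfrac{2u-u^2}{(1-u)^2}=\tfrac{1}{(1-u)^2}-1$ one gets $g_{\pm}'(u) = \pm c + 1 - \tfrac{1}{(1-u)^2}$ and $g_{\pm}''(u) = -\tfrac{2}{(1-u)^3}<0$ for $u<1$, so $g_{\pm}$ is strictly concave on $(-\infty,1)$ and any interior stationary point is the unique global maximiser. Solving $g_{\pm}'(u)=0$ yields $(1-u)^2 = \tfrac{1}{1\pm c}$, hence $u_{\pm}^{*} = 1-(1\pm c)^{-1/2}$, taking the root with $1-u>0$. For the upper tail, $c>0$ forces $u_{+}^{*}\in(0,1)$; for the lower tail, $t\le\E[Z]$ gives $c = \tfrac{at}{a\E[Z]+b}\le\tfrac{a\E[Z]}{a\E[Z]+b}<1$ (the boundary case $b=0,\,t=\E[Z]$ being degenerate), so $0<1-c<1$ and $u_{-}^{*}<0$, as required for feasibility. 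Translating back via $\lambda = \tfrac{2u}{aM}$ reproduces exactly the claimed maximisers $\tfrac{2}{aM}\bigl(1-(1+\tfrac{at}{a\E[Z]+b})^{-1/2}\bigr)$ and $\tfrac{2}{aM}\bigl(1-(1-\tfrac{at}{a\E[Z]+b})^{-1/2}\bigr)$.

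Finally I would evaluate $g_{\pm}(u_{\pm}^{*})$. Substituting $1-u_{\pm}^{*} = (1\pm c)^{-1/2}$ and simplifying (writing $r:=1-u_{\pm}^{*}$, so $\pm c = r^{-2}-1$, the numerator factors as $(1-r)^2$) collapses the expression to $g_{\pm}(u_{\pm}^{*}) = \bigl((1\pm c)^{1/2}-1\bigr)^2 = 2\pm c - 2\sqrt{1\pm c} = 2\,s(\pm c/2)$, the last step being immediate from $s(x)=1+x-\sqrt{1+2x}$. Multiplying by $\tfrac{2v}{a^2M}$ gives the stated optima $\tfrac{4(a\E[Z]+b)}{a^2M}\,s\!\left(\pm\tfrac{at}{2(a\E[Z]+b)}\right)$. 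The argument is entirely routine differentiation and algebra; the only points that require genuine care are verifying that the stationary point lands inside the open feasible region — this is precisely where the hypotheses $t>0$ (upper tail) and $t\le\E[Z]$ (lower tail) are used — and confirming global optimality, which is handled by the strict concavity $g_{\pm}''<0$. Recognising the simplified optimum as $2s(\pm c/2)$ is the one non-mechanical step.
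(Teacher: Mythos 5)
Your proof is correct, and it supplies a self-contained argument where the paper merely cites \cite[Lemma~11]{boucheronConcentrationInequalitiesUsing2003} and gives no proof of its own. The substitution $u=\tfrac{aM\lambda}{2}$, the identity $\tfrac{2u-u^2}{(1-u)^2}=\tfrac{1}{(1-u)^2}-1$, the concavity check $g_\pm''<0$, the verification that the stationary point lies in the open feasible set (using $t>0$ for the upper tail and $t\le\E[Z]$ for the lower tail to keep $|c|<1$ where needed), and the simplification $g_\pm(u^*_\pm)=\bigl(\sqrt{1\pm c}-1\bigr)^2=2s(\pm c/2)$ are all accurate; the only wrinkle worth a footnote is the boundary case $b=0,\,t=\E[Z]$, where $c=1$ and the lower-tail supremum equals $2s(-1/2)\cdot\tfrac{2v}{a^2M}$ but is approached only as $\lambda\to-\infty$ rather than attained, so the stated maximiser formula (\ref{eq:optimisation_lower_lambda}) ceases to apply there even though the supremum value remains correct.
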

Note that the proof of this result follows similarly to \cite[Lemma 11]{boucheronConcentrationInequalitiesUsing2003}.

\begin{proof}[Proof of Theorem \ref{theorem:M_symmetry}]
Combining Theorem \ref{theorem:sobolev} with Definition \ref{def:(M,a,b)self-bounding} and Remark \ref{remark:alpha_M}, it follows that for $\lambda \in \mathbb{R}$,
\begin{align} 
	\sum_{i=1}^{n} \E[e^{\lambda Z} \psi (-\lambda(Z -  Z_i))] < \frac{\psi (-M \lambda)}{M} \sum_{i=1}^{n} \E[(Z -  Z_i) e^{\lambda Z}] & = \frac{\psi (-M \lambda)}{M} \E[e^{\lambda Z} \sum_{i=1}^{n} (Z -  Z_i)] \\ & \leq  \frac{\psi (-M \lambda)}{M} \E[e^{\lambda Z} (a Z + b)].
\end{align}
Define $G(\lambda)$ as in Corollary \ref{corollary:cumulant_props}, after some algebraic manipulation, we obtain the differential inequality 
\begin{align} \label{eq:differential_ineq}
	\left(\lambda - a \frac{\psi (-M \lambda)}{M}\right) G'(\lambda) - G(\lambda) \leq (a \E[Z] + b) \frac{\psi (-M \lambda)}{M}. 
\end{align}
We proceed by exploring the cases $\lambda > 0$ and $\lambda < 0$ separately.
\begin{enumerate}
    \item Case $\lambda > 0$. \newline
Since $\psi(-x) \leq \frac{x^2}{2}$ for $x>0$ \cite{boucheronConcentrationSelfboundingFunctions2009}, by setting $x = \lambda M$, and dividing by $\lambda^2$, (\ref{eq:differential_ineq}) yields
\begin{align} \label{eq:diff_thm_1}
	\left(\dfrac{1}{\lambda} - \frac{aM}{2}\right) G'(\lambda) - \dfrac{1}{\lambda^2}G(\lambda) \leq (a \E[Z] + b)\frac{M}{2}, \ \text{for} \ \lambda \in \left(0,\frac{2}{aM}\right). 
\end{align}
Following the same derivation approach as in \cite{boucheronConcentrationSelfboundingFunctions2009} results in
\begin{align}
	\dfrac{d}{d\lambda}\left(\left(\frac{1}{\lambda} - \frac{aM}{2}\right)G(\lambda)\right) = \left(\dfrac{1}{\lambda} - \frac{aM}{2}\right) G'(\lambda) - \dfrac{1}{\lambda^2}G(\lambda).
\end{align}
By observing that $G(0) = G'(0) = 0$, it follows by application of L'H\^opital's rule that
\begin{align}
	\lim_{s \to 0} \frac{1}{s} G(s) = 0.
\end{align}
Hence, integration of (\ref{eq:diff_thm_1}) yields 
\begin{align} 
G(\lambda) & \leq \dfrac{(a \E[Z] + b) \lambda^2 M}{2(1 - \frac{aM \lambda}{2} )}, \ \text{for} \ \lambda \in \left(0,\frac{2}{aM}\right).
\end{align}
This concludes the proof for case $\lambda>0$. 
\item Case $\lambda < 0$. \newline
From Corollary \ref{corollary:cumulant_props}, we note that for $\lambda < 0, G'(\lambda) < 0$. Using the inequality $\psi(-x) \geq \frac{x^2}{2}$ for $x < 0$, it follows that
\begin{align}
	\lambda > \lambda - \frac{a(-\lambda M)^2}{2M} \geq  \lambda - a \frac{\psi(-\lambda M)}{M},
\end{align}
multiplying by $G'(\lambda) < 0$ yields
\begin{align} \label{eq:lb_diff_ineq}
	G'(\lambda )\lambda < G'(\lambda)\left(\lambda - \frac{a(-\lambda M)^2}{2M}\right) \leq  G'(\lambda)\left(\lambda - \frac{a\psi(-\lambda M)}{M}\right).
\end{align} 
Combining (\ref{eq:differential_ineq}) with (\ref{eq:lb_diff_ineq}) , it follows that for $\lambda < 0$
\begin{align}
	G'(\lambda)\left(\lambda -\frac{a\lambda^2 M}{2}\right) - G(\lambda) \leq (a \E[Z] + b)\frac{\psi(-\lambda M)}{M}, \ \text{for} \ \lambda < 0.
\end{align}
Dividing by $\lambda^2$ and integrating from $\lambda$ to $0$ yields
\begin{align}
	-\left(\frac{1}{\lambda} - \frac{aM}{2}\right)G(\lambda) & \leq (a\E[Z] + b) \int_{\lambda}^{0} \frac{\psi(-s M)}{s^2 M} \, ds \\ & = (a\E[Z] + b) \int_{0}^{\lambda} - \frac{\psi(-s M)}{s^2 M} \, ds.
\end{align}
Using $- \psi(-x) \leq - \frac{x^2}{2}$ for $x \leq 0$, 
\begin{align}
	(a\E[Z] + b) \int_{0}^{\lambda} - \frac{\psi(-s M)}{s^2 M} \, ds \leq (a\E[Z] + b) \int_{0}^{\lambda} - \frac{M}{2} \, ds = -(a\E[Z] + b)\frac{\lambda M}{2},
\end{align}
which yields the differential inequality
\begin{align}
	-\left(\frac{1}{\lambda} - \frac{aM}{2}\right)G(\lambda) \leq -(a\E[Z] + b)\frac{\lambda M}{2}.
\end{align}
Since $\lambda < 0$, $\left(\frac{1}{\lambda} - \frac{aM}{2}\right) < 0$ for $a \geq 0, M > 0$ and $-(\frac{1}{\lambda} - \frac{aM}{2}) > 0$ for $\lambda < 0$. Hence, we obtain the bound
\begin{align}
	G(\lambda) \leq \frac{(a \E[Z] + b)\lambda^2 M}{2(1-\frac{a\lambda M}{2})}, \ \text{for} \ \lambda < 0,
\end{align} 
and the cumulant bound given in (\ref{eq:cumulant_bound_symmetry}) follows. Combining Theorem \ref{theorem:optimisation_symmetry} with the inequality $2s(x) \geq y(x)\coloneq \frac{x^2}{1+x}$ for $x \geq 0$ \cite[p. 878]{pascalmassartConstantsTalagrandsConcentration2000}, it follows that
\begin{align}
		\exp\left(-\dfrac{4(a\E[Z]+b)}{a^2 M}s\left(\frac{at}{2(a\E[Z]+b)}\right)\right) & \leq \exp\left(-\dfrac{2(a\E[Z]+b)}{a^2 M}y\left(\frac{at}{2(a\E[Z]+b)}\right)\right) \\ & = \exp\left(-\dfrac{1}{M} \dfrac{t^2}{2(a\E[Z] + b) + at}\right).
\end{align}
Similarly, application of the inequality $2s(-x) \geq y(x)$ for $0 \leq x \leq \frac{1}{2}$ yields
	\begin{align}
		\exp\left(-\dfrac{4(a\E[Z]+b)}{a^2 M}s\left(-\frac{at}{2(a\E[Z]+b)}\right)\right) & \leq \exp\left(-\dfrac{2(a\E[Z]+b)}{a^2 M}y\left(\frac{at}{2(a\E[Z]+b)}\right)\right) \\ & = \exp\left(-\dfrac{1}{M} \dfrac{t^2}{2(a\E[Z] + b) + at)}\right).
	\end{align}	
This holds for $0 \leq \frac{a t}{2(a\E[Z]+b)} \leq \frac{1}{2}$ which holds for $t \leq \E[Z] + \frac{b}{a}$. To prove the inequality $2s(-x) \geq y(x)$ for a specific value of $x$, we obtain the Taylor expansion around $x=0$, and the result follows.	This concludes the proof for the case $\lambda<0$.
\end{enumerate}
\end{proof}

\subsection{Proof of Theorem \ref{theorem:M_improved}}

The proof of Theorem \ref{theorem:M_improved} follows a similar structure to that in \cite{boucheronConcentrationSelfboundingFunctions2009}, with some minor amendments. 

\begin{lemma} \label{lemma:Boucheron_ODE} Let $f$ be a non-decreasing continuosly differentiable function on an open interval $I$ containing $0$ such that $f(0)=0, f'(0) > 0$ and $f'(x) \neq 0$ for $x \neq 0$. Let also $g$ be a continuous function on $I$ and consider an infinitely many times differentiable function $G$ on $I$ such that $G(0) = G'(0) = 0$ and for every $\lambda \in I$,
\begin{align}
f(\lambda)G'(\lambda) - f'(\lambda)G(\lambda) \leq f^2(\lambda) g(\lambda). \nonumber
\end{align}
Then, for every $\lambda \in I, G(\lambda) \leq f(\lambda) \int_{0}^\lambda g(x)\, dx$.    
\end{lemma}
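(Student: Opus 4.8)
The plan is to recognise the hypothesis as a statement about the derivative of the quotient $G/f$ and then integrate, with care taken at the origin where $f$ vanishes. Since $f$ is non-decreasing with $f(0)=0$ and $f'(x)\neq 0$ for $x\neq 0$, $f$ is strictly increasing, so $f(\lambda)>0$ for $\lambda>0$ and $f(\lambda)<0$ for $\lambda<0$; in particular the quotient $h(\lambda)\coloneq G(\lambda)/f(\lambda)$ is well defined and differentiable on $I\setminus\{0\}$, and dividing the hypothesis by $f(\lambda)^2>0$ gives
\begin{align} \nonumber
 h'(\lambda)=\frac{f(\lambda)G'(\lambda)-f'(\lambda)G(\lambda)}{f(\lambda)^2}\leq g(\lambda).
\end{align}
First I would extend $h$ continuously across $0$: because $G(0)=G'(0)=f(0)=0$ and $f'(0)>0$, L'H\^opital's rule gives $\lim_{\lambda\to 0}h(\lambda)=G'(0)/f'(0)=0$, so setting $h(0)\coloneq 0$ makes $h$ continuous on all of $I$.

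Next I would integrate $h'\leq g$ on a compact subinterval avoiding $0$ and let the free endpoint tend to $0$. For $\lambda>0$ and $0<\varepsilon<\lambda$, integration on $[\varepsilon,\lambda]$ gives $h(\lambda)-h(\varepsilon)\leq\int_\varepsilon^\lambda g(x)\,dx$ (legitimate since $h'$ is continuous, hence integrable, there); letting $\varepsilon\downarrow 0$ and using continuity of $h$ at $0$ and of $g$ yields $h(\lambda)\leq\int_0^\lambda g(x)\,dx$, and multiplying through by $f(\lambda)>0$ gives $G(\lambda)\leq f(\lambda)\int_0^\lambda g(x)\,dx$. For $\lambda<0$ and $\lambda<\varepsilon<0$, integration on $[\lambda,\varepsilon]$ gives $h(\varepsilon)-h(\lambda)\leq\int_\lambda^\varepsilon g(x)\,dx$; letting $\varepsilon\uparrow 0$ gives $-h(\lambda)\leq-\int_0^\lambda g(x)\,dx$, i.e.\ $h(\lambda)\geq\int_0^\lambda g(x)\,dx$, and now multiplying by $f(\lambda)<0$ \emph{reverses} the inequality, again producing $G(\lambda)\leq f(\lambda)\int_0^\lambda g(x)\,dx$. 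The case $\lambda=0$ is immediate since both sides vanish.

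The only genuinely delicate steps are the two involving the origin: justifying that $h$ extends continuously there with value $0$ (this is precisely where $G(0)=G'(0)=0$ and $f'(0)>0$ are used), and tracking the sign of $f(\lambda)$ when clearing the denominator — for $\lambda<0$ the inequality on $h$ flips under multiplication by $f(\lambda)<0$, and it is this flip that makes the single clean bound $G(\lambda)\leq f(\lambda)\int_0^\lambda g(x)\,dx$ hold on both sides of $0$. Everything else — integrability of $h'$ on compact subintervals of $I\setminus\{0\}$ and continuity of $x\mapsto\int_0^x g$ — is routine given that $g$ is continuous on $I$.
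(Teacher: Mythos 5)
Your proof is correct and matches the standard argument: it is exactly the quotient-rule observation that $(G/f)'\leq g$ away from $0$, a L'H\^opital argument to extend $G/f$ continuously across $0$ with value $0$, and integration on each side of the origin with the necessary sign flip when multiplying back by $f(\lambda)<0$. The paper itself does not reproduce a proof---it simply cites Lemma~3 of Boucheron, Lugosi, and Massart (2009)---and the argument there is essentially the one you give, so this is a faithful reconstruction rather than a different route.
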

\begin{proof}
The proof of this result can be found in \cite[Lemma 3]{boucheronConcentrationSelfboundingFunctions2009}.
\end{proof}

\begin{lemma} \label{lemma:boucheron_second_proof} Let $I$ be an open interval containing $0$ and let $\rho$ be continuous on $I$. Let also $a \geq 0$ and $v>0$. Let $H: I \to \mathbb{R}$, be an infinitely many times differentiable function satisfying
\begin{align}
    M (\lambda H'(\lambda) - H(\lambda)) \leq \rho(\lambda)(a H'(\lambda) + a \E[Z] + b)
\end{align}
with
\begin{align} \label{eq:lemma_4_condition_1}
    aH'(\lambda) + a \E[Z] + b > 0, \quad \text{for} \ \lambda \in I \ \text{and} \ H'(0) = H(0) = 0.
\end{align}
Consider the function $\rho_0: I \to \mathbb{R}$. Assume that $G_0: I \to \mathbb{R}$ is infinitely many times differentiable such that for $\lambda \in I$,
\begin{align}
    1 + aG'_{0}(\lambda)> 0, \quad \text{and} \quad \ G_0(0) = G_0'(0) = 0, \ \quad\text{and}\quad \ G''_{0}(0) = M.
\end{align}
Assume that $G_0$ solves the differential equation
\begin{align}
    M(\lambda G'_{0}(\lambda) - G_0(\lambda)) = \rho_{0}(\lambda)(a G'_0(\lambda) + 1).
\end{align}
Then if $\rho(\lambda) \leq \rho_0(\lambda)$ for $\lambda \in I$, it holds that $H \leq v G_0$.
\end{lemma}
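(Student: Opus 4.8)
The plan is to run the ratio argument behind Lemma~\ref{lemma:Boucheron_ODE}, but between two ``modified'' functions rather than between $H$ and $vG_0$ directly. Throughout take $v = a\E[Z]+b$, which is the choice making $vG_0$ satisfy the ``equality-with-$\rho_0$'' version of the hypothesis on $H$, i.e. $M(\lambda (vG_0)' - vG_0) = \rho_0(a(vG_0)' + a\E[Z]+b)$. Assume first $a>0$ and set $f(\lambda)\coloneq aH(\lambda)+v\lambda$ and $f_0(\lambda)\coloneq aG_0(\lambda)+\lambda$. By the hypotheses, $f'(\lambda)=aH'(\lambda)+a\E[Z]+b>0$ and $f_0'(\lambda)=aG_0'(\lambda)+1>0$ on $I$, while $f(0)=f_0(0)=0$; hence $f$ and $f_0$ are strictly increasing, vanish only at $0$, and have the same sign as $\lambda$. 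Using the identity $\lambda f'(\lambda)-f(\lambda)=a(\lambda H'(\lambda)-H(\lambda))$ and its analogue for $f_0$, the hypothesis on $H$ and the equation for $G_0$ rewrite as
\begin{align} \nonumber
f'(\lambda)\bigl(M\lambda-a\rho(\lambda)\bigr)\le Mf(\lambda)\qquad\text{and}\qquad f_0'(\lambda)\bigl(M\lambda-a\rho_0(\lambda)\bigr)=Mf_0(\lambda).
\end{align}

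Next I would fix the signs of $M\lambda-a\rho$ and $M\lambda-a\rho_0$ on $I\setminus\{0\}$, since every subsequent division depends on them. From the $f_0$ identity and $f_0'>0$, the quantity $M\lambda-a\rho_0$ has the sign of $\lambda$ and in particular is nonzero for $\lambda\ne0$; for $\lambda>0$ the hypothesis $\rho\le\rho_0$ then forces $M\lambda-a\rho\ge M\lambda-a\rho_0>0$, while for $\lambda<0$ the $f$-inequality together with $f'>0$ and $f<0$ forces $M\lambda-a\rho<0$. Dividing the two displayed relations by $f$ and by $f_0$ respectively, and then by the corresponding coefficient (with a sign flip when $\lambda<0$), gives for all $\lambda\in I\setminus\{0\}$
\begin{align} \nonumber
\frac{f'(\lambda)}{f(\lambda)}\le\frac{M}{M\lambda-a\rho(\lambda)}\qquad\text{and}\qquad\frac{f_0'(\lambda)}{f_0(\lambda)}=\frac{M}{M\lambda-a\rho_0(\lambda)}.
\end{align}
Since $\rho\le\rho_0$ and $a\ge0$ give $M\lambda-a\rho\ge M\lambda-a\rho_0$, and both quantities carry the sign of $\lambda$, a short case check ($\lambda>0$ and $\lambda<0$) yields $M/(M\lambda-a\rho)\le M/(M\lambda-a\rho_0)$, and hence $\bigl(\log(f/f_0)\bigr)'=f'/f-f_0'/f_0\le0$ on $I\setminus\{0\}$.

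It then remains to integrate this monotonicity. By L'H\^opital's rule, $f(\lambda)/f_0(\lambda)\to f'(0)/f_0'(0)=v$ as $\lambda\to0$; since $f/f_0>0$ off the origin and $\log(f/f_0)$ is non-increasing on each side of $0$, we get $f/f_0\le v$ for $\lambda>0$ and $f/f_0\ge v$ for $\lambda<0$, and in either case (using that $f_0$ has the sign of $\lambda$) this rearranges to $f(\lambda)\le vf_0(\lambda)$, i.e. $aH(\lambda)+v\lambda\le v\bigl(aG_0(\lambda)+\lambda\bigr)$, i.e. $H(\lambda)\le vG_0(\lambda)$. The degenerate case $a=0$ (so $v=b$) is handled directly: the equation for $G_0$ reduces to $M(\lambda G_0'-G_0)=\rho_0$, whence $M(\lambda H'-H)\le v\rho\le v\rho_0=M\bigl(\lambda(vG_0)'-vG_0\bigr)$, so $D\coloneq H-vG_0$ satisfies $\lambda D'(\lambda)-D(\lambda)\le0$ with $D(0)=D'(0)=0$, and Lemma~\ref{lemma:Boucheron_ODE} (applied with the roles $f(\lambda)=\lambda$, $g\equiv0$) gives $D\le0$.

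The only real difficulty is the sign bookkeeping: the orientation of $f'/f\le M/(M\lambda-a\rho)$, and of the comparison $M/(M\lambda-a\rho)\le M/(M\lambda-a\rho_0)$, both reverse between $\lambda>0$ and $\lambda<0$, so one has to check at each step that no denominator vanishes on $I\setminus\{0\}$ and that every division is taken in the correct direction. The strict positivity assumptions $aH'+a\E[Z]+b>0$ and $1+aG_0'>0$, together with the normalisations $H(0)=H'(0)=G_0(0)=G_0'(0)=0$ (and $G_0''(0)=M$, which fixes $G_0$ as the intended solution), are exactly what makes this analysis valid.
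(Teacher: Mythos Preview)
Your argument is correct. The sign bookkeeping is exactly right: with $f=aH+v\lambda$ and $f_0=aG_0+\lambda$ both strictly increasing through $0$, the identities $f'(M\lambda-a\rho)\le Mf$ and $f_0'(M\lambda-a\rho_0)=Mf_0$ do give $(\log(f/f_0))'\le0$ on each side of the origin, and the L'H\^opital limit $f/f_0\to v$ at $0$ combined with the sign of $f_0$ yields $f\le vf_0$, hence $H\le vG_0$. The separate treatment of $a=0$ via Lemma~\ref{lemma:Boucheron_ODE} with $f(\lambda)=\lambda$, $g\equiv0$ is also fine.

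Your route is genuinely different from the paper's. The paper substitutes $\rho_0=M(\lambda G_0'-G_0)/(aG_0'+1)$ into the hypothesis on $H$, rearranges to the form
\[
(\lambda+aG_0)H'-(1+aG_0')H\le v(\lambda G_0'-G_0),
\]
and then invokes Lemma~\ref{lemma:Boucheron_ODE} directly with $f(\lambda)=\lambda+aG_0(\lambda)$ and $g(\lambda)=v(\lambda G_0'-G_0)/f(\lambda)^2$ (extended by $vM/2$ at $\lambda=0$), so that $H\le f\int_0^\lambda g=vG_0$. In other words, the paper studies the ratio $H/f_0$ through the ready-made ODE lemma, whereas you study the ratio $(aH+v\lambda)/f_0$ and prove its monotonicity from scratch. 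The paper's version is shorter and uniform in $a$, with no separate $a=0$ case and no explicit sign analysis (that is hidden inside Lemma~\ref{lemma:Boucheron_ODE}); your version is more self-contained, makes the role of $\rho\le\rho_0$ and of the positivity assumptions completely transparent, and would survive in settings where one does not already have Lemma~\ref{lemma:Boucheron_ODE} at hand.
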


\begin{proof} There are some differences between this lemma and \cite[Lemma 4]{boucheronConcentrationSelfboundingFunctions2009}, so whilst the proof structure is similar, for the sake of completeness, we present it in full. Using the assumption that $\rho(\lambda) \leq \rho_{0}(\lambda)$ and the rest of the assumptions presented in the statement of the lemma, it follows that
\begin{align}
M (\lambda H'(\lambda) - H(\lambda)) \leq \dfrac{M(\lambda G'_{0}(\lambda) - G_0(\lambda))}{aG'_0(\lambda) + 1}(a H'(\lambda) + v),
\end{align}
and thus, expressing the inequality in terms similar to Lemma \ref{lemma:Boucheron_ODE} yields
\begin{align}
    (\lambda + a G_0(\lambda)) H'(\lambda) - (1+aG'_{0}(\lambda)) H(\lambda) \leq (a \E[Z] + b) (\lambda G'_0(\lambda) - G_0(\lambda)).
\end{align}
Define the functions $f(\lambda) = \lambda + a G_0(\lambda)$ for $\lambda \in I$ and $g:I\to \mathbb{R}$ as the piecewise function
\begin{align}
    g(\lambda) = \begin{cases}
    (a\E[Z] + b) \dfrac{(\lambda G'_{0}(\lambda) - G_0(\lambda))}{(\lambda + aG_0'(\lambda))^2}, & \lambda \neq 0, \vspace{2mm} \\
    (a\E[Z] + b) \dfrac{M}{2}, & \lambda = 0.
    \end{cases}
\end{align}
Then the function $g(\lambda)$ is continuous on $I$, and by Lemma \ref{lemma:Boucheron_ODE}, it holds that
\begin{align}
    H(\lambda) \leq f(\lambda) \int_{0}^{\lambda} g(x) \, dx = (a\E[Z]+b) f(\lambda) \int_{0}^\lambda \left( \dfrac{G_0(x)}{f(x)} \right)' \, dx,
\end{align}
and the result follows. This concludes the proof.
\end{proof}
\begin{remark}
To obtain the cumulant bound in Theorem \ref{theorem:M_symmetry} for $\lambda > 0$, note that the solution to the differential equation, parametrized by $\gamma \geq 0$,
\begin{align}
    M(\lambda G'_{\gamma}(\lambda) - G_{\gamma}(\lambda)) = \dfrac{(-\lambda M)^2}{2}(\gamma G'_{\gamma}(\lambda) + 1),
\end{align}
is given by
\begin{align} \label{eq:g_gamma_sol}
    G_{\gamma}(\lambda) = \dfrac{M \lambda^2}{2\left(1-\frac{\gamma \lambda M}{2}\right)}, \quad \frac{\gamma \lambda M}{2} \neq 1,
\end{align}
where we have used the fact that $\rho(\lambda) = \psi(-\lambda M) \leq \rho_0(\lambda) = \frac{(-\lambda M)^2}{2}$. Substituting $\gamma = a$ yields the bound. Note that the solution is consistent since for $\gamma \geq 0$, $G_{\gamma}(0) = G_{\gamma}'(0) = 0 \ \text{and} \ G''_{\gamma}(0) = M$.
\end{remark}

\subsubsection{Expressions for $\delta(a,M)_+$ and $\delta(a,M)_-$} The calculation of both quantities require two steps. We start by first checking that the assumptions for Lemma \ref{lemma:boucheron_second_proof} hold (\textit{condition} $1$). We then proceed by examining whether the solution holds for the solution of the optimisation problem posed by the application of Markov's inequality (\textit{condition} $2$). Consider Lemma \ref{lemma:boucheron_second_proof} and define the function
\begin{align}
    \rho_{\gamma}(\lambda) \coloneq \dfrac{M(\lambda G'_{\gamma}(\lambda) - G_{\gamma}(\lambda))}{1 + a G'_{\gamma}(\lambda)},
\end{align}
which, for some interval $I$, is the solution to the differential equation 
\begin{align}
    M(\lambda H'(\lambda) - H(\lambda)) = \rho_{\gamma}(\lambda)(1 + aH'(\lambda)),
\end{align}
provided that $1 + aG_{\gamma}'(\lambda)>0$ on $I$. Thus, we seek to find the smallest $\gamma$ on the relevant interval $I$ such that it satisfies the following two conditions:
\begin{align} \label{eq:condition_1}
    \phi(-\lambda M) \leq \rho_{\gamma}(\lambda), \\ 1 + a G'_{\gamma}(\lambda) > 0, \label{eq:condition_2}
\end{align}
since $G_{\gamma}(\lambda)$ solves other differential equations, in addition to (\ref{eq:g_gamma_sol}) (on the relevant interval).
To that end, introduce $D_{\gamma}(\lambda) = (M\gamma \lambda - 2)^2 (1 + a G'_{\gamma}(\lambda)) = 4(1 + \lambda M (a - \gamma) + \frac{M \gamma \lambda^2}{4}(M\gamma - a))$, then it holds that 
\begin{align} \rho_{\gamma}(\lambda) = \dfrac{2M^2 \lambda^2}{D_{\gamma}(\lambda)}, \quad \text{for} \ M \gamma \lambda \neq 2. \end{align}
Note that $1 + aG'_{\gamma}(\lambda) > 0 \iff D_{\gamma}(\lambda) > 0$. 

\subsection{Examination of satisfaction of {\it Condition 1}}
%\begin{enumerate}
%\item Examination of satisfaction of {\it Condition 1}:
\subsubsection{Case $\lambda \geq 0$ for $\delta(a,M)_+$.}
\begin{enumerate}[label=(\alph*)] 
    \item First, set $\gamma = 0$, then $D_{0}(\lambda) = 4(1 + \lambda M a) > 0$ for $\lambda \geq 0$. For $\lambda \geq 0$, we seek to find $a$ such that
\begin{align} \rho_{0}(\lambda) = \dfrac{2M^2\lambda^2}{4(1+\lambda M a)} \geq e^{-\lambda M} + \lambda M - 1 = \phi(-\lambda M). \end{align} 
Rewriting it in terms of $a$ results in
\begin{align}
    a \leq \mu(\lambda) = \dfrac{2M^2\lambda^2 - 4(e^{-\lambda M} + M\lambda - 1)}{4 \lambda M(e^{-\lambda M} + \lambda M - 1)}.
\end{align}
Since the function $\mu$ is monotonically increasing in $\lambda \geq 0$, the the Mclaurin series expansion for $e^{-\lambda M} = 1 - \lambda M + \dfrac{(\lambda M)^2}{2} - \dfrac{(\lambda M)^3}{6} + O(\lambda M)^4$, enables us to express the fraction as
\begin{align}
    \dfrac{2M^2\lambda^2 - 4(e^{-\lambda M} + M\lambda - 1)}{4 \lambda M(e^{-\lambda M} + \lambda M - 1)} & = \dfrac{2M^2\lambda^2 - 4\left(\dfrac{(\lambda M)^2}{2} - \dfrac{(\lambda M)^3}{6} + O(\lambda M)^4\right)}{4 \lambda M\left(\dfrac{(\lambda M)^2}{2} - \dfrac{(\lambda M)^3}{6} + O(\lambda M)^4\right)} \\ & = \dfrac{\dfrac{2(\lambda M)^3}{3} + O(\lambda M)^4}{2(\lambda M)^3 + O(\lambda M)^4} \to \dfrac{1}{3} \ \text{as} \ \lambda \to 0.
\end{align}
\item Set $\gamma = a-\frac{1}{3}$ for $a > \frac{1}{3}$, then we aim to prove that
\begin{align} D_{a-\frac{1}{3}}(\lambda) > 0 \ \text{and} \ \rho_{a-\frac{1}{3}}(\lambda) = \dfrac{2M^2\lambda^2}{D_{a - \frac{1}{3}}(\lambda)} \geq \phi(-\lambda M).\end{align}
From \cite{boucheronConcentrationSelfboundingFunctions2009}, we have the inequality
\begin{align}
    \dfrac{x^2}{2(1+\frac{x}{3})} \geq \phi(-x), \ \text{for} \ x \geq 0.
\end{align}
Since $M>0$, setting $x = \lambda M$ results in 
\begin{align}
    \dfrac{M^2 \lambda^2}{2\left(1+ \frac{M \lambda}{3}\right)} \geq \phi(-\lambda M), \ \text{for} \ \lambda \geq 0.
\end{align}
Therefore, the interval $I$ for which it holds that
\begin{align}
    \dfrac{M^2\lambda^2}{2\left(1 + \dfrac{\lambda M}{3} + \dfrac{M \left(a-\frac{1}{3}\right) \lambda^2}{4}(M\left(a-\frac{1}{3}\right) - a)\right)} \geq \dfrac{M^2 \lambda^2}{2\left(1+ \dfrac{\lambda M}{3}\right)} \geq \phi(-\lambda M),
\end{align}
depends on the condition
\begin{align} \label{eq:neg_sign}
    \dfrac{M (a-\frac{1}{3}) \lambda^2}{4}\left(M\left(a-\frac{1}{3}\right) - a\right) \leq 0 \iff M\left(a-\frac{1}{3}\right) - a \leq 0.
\end{align}
For $M > 1$, we require
\begin{align}
    \dfrac{1}{3} < a \leq \dfrac{M}{3(M-1)}.
\end{align}
For $M \leq 1$, then it follows that
\begin{align}
    M\left(a-\frac{1}{3}\right) - a < 0 \implies a(M-1) < \frac{M}{3} \implies a > \dfrac{M}{3(M-1)}.
\end{align}
Let $r(M) \coloneq \frac{M}{3(M-1)}$, then $r(0) = 0$ and $r'(M) < 0$ for $0 < M < 1$, and so for $M < 1$, it holds for any $a-\frac{1}{3} \geq 0$. When $M = 1$, $(a-\frac{1}{3}) - a < 0$ holds for any $a > \frac{1}{3}$. This holds for $I = \mathbb{R}$ since $\lambda^2 \geq 0$. We now consider the effects of $M\left(a-\frac{1}{3}\right) - a \leq 0$ on the condition $D_{a-\frac{1}{3}}(\lambda) > 0$. Under this condition, there exist two real roots of the equation
\begin{align} \label{eq:cond_1_positive}
    D_{a-\frac{1}{3}}(\lambda) = 2\left(1 + \dfrac{\lambda M}{3} + \dfrac{M \left(a-\frac{1}{3}\right) \lambda^2}{4}\left(M\left(a-\frac{1}{3}\right) - a\right)\right).    
\end{align}
Note that it has one positive and one negative root, given by 
\begin{align}
    \lambda = \dfrac{-\frac{M}{3} \pm \sqrt{\frac{M^2}{9} - M(a-\frac{1}{3})(M(a-\frac{1}{3})-a)}}{\frac{M}{2}(a-\frac{1}{3})(M(a-\frac{1}{3})-a)}.
\end{align}
The positive root, for $M\left(a-\frac{1}{3}\right) - a < 0$, is given by 
\begin{align}
    \lambda^* = \dfrac{-\frac{M}{3} - \sqrt{\frac{M^2}{9} - M(a-\frac{1}{3})(M(a-\frac{1}{3})-a)}}{\frac{M}{2}(a-\frac{1}{3})(M(a-\frac{1}{3})-a)}.
\end{align}
All that remains is to notice that $D''_{a-\frac{1}{3}}(\lambda) < 0$, so that $D_{a-\frac{1}{3}}(\lambda) > 0$ for $\lambda \in [0,\lambda^*)$.
\end{enumerate}
\subsubsection{Case $\lambda \leq 0$ for $\delta(a,M)_-$.}
%$\lambda \leq 0$ for $\delta(a,M)_-$.
Before proceeding, we make note that condition (\ref{eq:lemma_4_condition_1}) of Lemma \ref{lemma:boucheron_second_proof} is always satisfied by (\ref{eq:differential_ineq}), even for $\lambda < 0$.
\begin{enumerate}[label=(\alph*)]
\item $D_{0}(\lambda) > 0$ holds for
\begin{align}
    1 + \lambda M a > 0 \implies \lambda > -\dfrac{1}{aM}.
\end{align}
Then, checking for the values $a > 0$ with $\lambda < 0$ such that
\begin{align} \rho_{0}(\lambda) = \dfrac{2M^2\lambda^2}{4(1+\lambda M a)} \geq e^{-\lambda M} + \lambda M - 1 = \phi(-\lambda M). \end{align} 
And in terms of $a$, since $-\frac{1}{aM} < \lambda < 0$
\begin{align}
    a \geq \mu_2(\lambda) = \dfrac{2M^2\lambda^2 - 4(e^{-\lambda M} + M\lambda - 1)}{4 \lambda M(e^{-\lambda M} + \lambda M - 1)}.
\end{align}
On this interval, $\mu_2$ is decreasing in $\lambda$ for fixed $M$. Hence, by taking the limit as $\lambda \to 0$, we obtain $a \geq \frac{1}{3}$. Checking for the values $\lambda > -\frac{3}{M}$, the following inequality holds: 
\begin{align}
    \dfrac{M^2\lambda^2}{2(1 + \frac{\lambda M}{3})} \geq e^{-\lambda M} + \lambda M - 1. 
\end{align}
Hence, for $a \geq \frac{1}{3}$ and $-\frac{1}{aM} < \lambda < 0$,
\begin{align}
    \rho_{0}(\lambda) = \dfrac{M^2\lambda^2}{2(1+\lambda M a)} \geq \dfrac{M^2\lambda^2}{2(1 + \frac{\lambda M}{3})} \geq \phi(-\lambda M).
\end{align}
\end{enumerate}
\subsection{Examination of satisfaction of {\it Condition 2}}
\subsubsection{Case $\lambda > 0$.}
%\item Condition 2:
\begin{enumerate}[label=(\alph*)]
\item If $0 \leq a \leq \frac{1}{3}$ and $\delta(a,M)_+ = 0$, then it follows that 
\begin{align}
        \exp\left(- \sup_{\lambda > 0} \left(t\lambda - \dfrac{(a \E[Z] + b) M \lambda^2}{2}\right)\right)= \exp\left(-\dfrac{1}{M} \dfrac{t^2}{2(a\E[Z] + b)} \right).
    \end{align}
\item If $a > \frac{1}{3}$, $M \leq 1$ and $\delta(a,M)_+ = a-\frac{1}{3}$, then we check when $\lambda_2$, defined as
    \begin{align} \label{eq:condition_2_check_positive}
        \lambda_2 \coloneq \sup_{\lambda > 0} t\lambda - \dfrac{(a\E[Z] + b) M \lambda^2}{2\left(1 - \frac{(a-\frac{1}{3}) M\lambda}{2}\right)} = \dfrac{2}{(a-\frac{1}{3})M}\left(1 - \left(1 + \dfrac{(a-\frac{1}{3}) t}{a\E[Z] + b}\right)^{-0.5}\right),
    \end{align}
    satisfies both $\lambda_2< \lambda^*$ and $\lambda_2 < \frac{2}{(a-\frac{1}{3})M}.$ Note that $\lambda_2 < \frac{2}{(a-\frac{1}{3})M}$ for all $t \geq 0$. To check $\lambda_2< \lambda^*$, we need a more thorough examination. In this setting, $M(a-\frac{1}{3})-a < 0$, which yields in the inequality
    \begin{align}\label{eq:optimisation_2_check}
        \frac{M}{3} + \sqrt{\frac{M^2}{9} - M\left(a-\frac{1}{3}\right)\left(M\left(a-\frac{1}{3}\right)-a\right)} + \left(M\left(a - \frac{1}{3}\right)-a\right)\left(1 - \left(1 + \dfrac{(a-\frac{1}{3}) t}{v}\right)^{-0.5}\right) & > 0.
    \end{align}
    Taking $t \to \infty$ implies, after some manipulations and noting $M \leq 1$, the inequality
    \begin{align}
            M\left(1 - M + \dfrac{2M}{3a} - \dfrac{1}{3a}\right) > (M-1)^2,
        \end{align}
which is satisfied for $\frac{1}{2} < M \leq 1$. Consider now $ M > 1$ with $\frac{1}{3} < a < \frac{M}{3(M-1)}$ and $\delta(a,M)_+ = a - \frac{1}{3}$. Following the same analysis as the one presented after (\ref{eq:optimisation_2_check}), it suffices to check the condition
\begin{align}
    \dfrac{M}{3} + M\left(a-\frac{1}{3}\right)-a > 0 \iff a(M-1) > 0,
\end{align}
which follows since $M > 1$.
\end{enumerate}
\subsubsection{Case $\lambda < 0$}
If $a > \frac{1}{3}$ and $\delta(a,M)_+ = 0$, then it follows that \begin{align}
        \exp\left(- \sup_{-\frac{1}{aM}< \lambda < 0} \left(-t\lambda - \dfrac{(a \E[Z] + b) M \lambda^2}{2}\right)\right)= \exp\left(-\dfrac{1}{M} \dfrac{t^2}{2(a\E[Z] + b)} \right),
    \end{align}
    when $-\frac{1}{aM} \leq -\frac{t}{(a\E[Z]+b)M} \iff t \leq \E[Z] + \frac{b}{a}$.

\begin{proof}[Proof of Remark \ref{remark:t_delta}]
    The analysis follows from (\ref{eq:optimisation_2_check}), and by further noting that for $M \leq \frac{1}{2}, -a(M-1) > 0$, and the result follows by algebraic manipulation.
\end{proof}

\begin{remark} \label{remark:improvement_upper_tail}
For some values of $a > \frac{1}{3}$ and $M$, with $M\left(a-\frac{1}{3}\right) - a \geq 0$, there exist solutions on $I =(0,\tilde{\lambda})$ with $\tilde{\lambda}>0$, to the inequality
    \begin{align} \label{eq:remark_impovements_uppertail}\dfrac{M^2\lambda^2}{2\left(1 + \dfrac{\lambda M}{3} + \dfrac{M \left(a-\frac{1}{3}\right) \lambda^2}{4}(M\left(a-\frac{1}{3}\right) - a)\right)} 
    %\geq \dfrac{M^2 \lambda^2}{2\left(1+ \dfrac{\lambda M}{3} + \Delta \lambda^2\right)} 
    \geq \phi(-\lambda M).
\end{align}
\end{remark}
For example, with $a=1$ and $M=1.9$ such that $M\left(a-\frac{1}{3}\right) - a = 4/15$, (\ref{eq:remark_impovements_uppertail}) holds on $I = (0,1.1)$, which can be observed numerically. In the case that $M(a-\frac{1}{3})-a > 0$, it follows that (\ref{eq:cond_1_positive}) holds for $\lambda \geq 0$ and \textit{condition} $1$ holds. For \textit{condition} $2$, assume the conditions on $a$ and $M$ hold with $M(a-\frac{1}{3})-a > 0$ on $I = (0,\tilde{\lambda})$ from (\ref{eq:condition_2_check_positive}), it suffices to check when
\begin{align} \label{eq:condition_numerics}
    \lambda_2 = \dfrac{2}{(a-\frac{1}{3})M}\left(1 - \left(1 + \dfrac{(a-\frac{1}{3}) t}{a\E[Z] + b}\right)^{-0.5}\right) < \tilde{\lambda},
\end{align}
which holds when
\begin{align}
 \left(1 + \dfrac{(a-\frac{1}{3}) t}{a\E[Z] + b}\right)^{-0.5} > 1-\dfrac{\tilde{\lambda}}{2}\left(a-\frac{1}{3}\right)M.
\end{align}
In the case $1-\frac{\tilde{\lambda}}{2}\left(a-\frac{1}{3}\right)M \leq 0$, then the inequality is satisfied for all $t > 0$. This can be found in examples such as $a = 1, M = 1.8$ and $\tilde{\lambda} \approx 2.63$, which is a significant improvement as Theorem \ref{theorem:M_improved} holds only for $a < \frac{M}{3(M-1)} = 0.75$. In the case that $1-\frac{\tilde{\lambda}}{2}\left(a-\frac{1}{3}\right)M > 0$, then (\ref{eq:condition_numerics}) holds for
\begin{align}
    t \leq \dfrac{a\E[Z]+b}{a-\frac{1}{3}}\left(    \dfrac{1}{\left(1-\frac{\tilde{\lambda}}{2}(a-\frac{1}{3})M\right)^2} - 1\right).
\end{align}

% \begin{lstlisting}[language=Matlab, style=matlabstyle]
% clear all
% close all

% a = 1;
% M = 1.8;

% phi = @(x) exp(x) - x - 1;
% g = @(x) ((M*x).^2) ./ ...
%     (2 * (1 + (M*x/3) + (M*(a-1/3)*(x.^2)/4) * (M*(a-1/3)-a)));
% f = @(x) phi(-M*x);
% d = @(x) g(x) - f(x);

% fplot(d, [0 3])
% \end{lstlisting}

\section*{Acknowledgement}
This research was funded by the UKRI Engineering and Physical Sciences Research Council (EPSRC grant number EP/S023666/1) through the Centre for Doctoral Training in Water Infrastructure and Resilience. The authors would also like to thank Thames Water for their contribution to the EPSRC grant number EP/S023666/1.

\bibliographystyle{plainnat}   % alternatives: unsrtnat, abbrvnat
\bibliography{bib}             % your .bib file, without extension

\end{document}